\newtheorem{theorem}{Theorem}[section]
\newtheorem{definition}{Definition}[section]
\newtheorem{lemma}{Lemma}[section]
\newtheorem{remark}{Remark}[section]
\newcommand{\proofbox}{\hspace{\fill}{$\Box$}}
\newenvironment{proof}{\textbf{Proof}.}{\proofbox}
\DeclareMathOperator{\cone}{cone}
\def\bar{\overline}
\date{}
\begin{document}

\title{ Locally H\"{o}lder continuity of the solution map to a boundary control problem with finite mixed control-state constraints}

\author{
	Nguyen Hai Son \footnote{
		School of Applied Mathematics and Informatics, Hanoi University of Science and Technology, No.1 Dai Co Viet, Hanoi, Vietnam; Email: son.nguyenhai1@hust.edu.vn} \ and Tuan Anh Dao \footnote{School of Applied Mathematics and Informatics, Hanoi University of Science and Technology, No.1 Dai Co Viet road, Hanoi, Vietnam; Email: anh.daotuan@hust.edu.vn}}
\maketitle

\noindent {\bf Abstract.} 
The local stability of the solution map to a parametric boundary control problem governed by semilinear elliptic equations with finite mixed pointwise constraints is considered in this paper. We prove that the solution map is locally H\"{o}lder continuous in $L^\infty-$norm of control variable when the strictly nonnegative second-order optimality conditions are satisfied for the unperturbed problem.

\medskip

\noindent {\bf  Key words.} Solution stability~$\cdot$~Locally   H\"{o}lder continuity~$\cdot$~Solution mapping~$\cdot$~Boundary control~$\cdot$~Mixed control-state constraints.

\medskip

\noindent {\bf AMS Subject Classifications.} 49K15~$\cdot$~90C29

\section{Introduction}

Let $\Omega$ be a bounded domain in $\mathbb{R}^2$ with the boundary
$\Gamma$ of class $C^{1,1}$. We consider the following
parametric boundary control problem. For each parameter $\lambda \in L^\infty(\Gamma)$, determine a control function $u \in L^2(\Gamma)$,
and a corresponding state function $y\in W^{1,r}(\Omega)$, $2<r<4$, which
\begin{align}
	\text{minimize}\  J(y, u, \lambda)=&\int_\Omega L(x,y(x))dx+ \notag \\
	&+\int_\Gamma\big(\ell(x,y(x), \lambda(x)) + \alpha(\lambda(x))u(x) + \frac{1}{2}\beta(\lambda(x))u^2(x)\big)ds, \label{P1} 
\end{align} subject to 
\begin{align}
	&\begin{cases}
		Ay  = 0  \quad &{\rm in}\ \Omega,\\
		\partial_{\nu} y + h(x,y, \lambda)= u +\lambda \quad  &{\rm on}\ \Gamma,
	\end{cases} \label{P2} \\
	& g_i(x,y(x),\lambda(x))+u(x) \leq 0\ {\rm a.e.}\ x\in\Gamma, \ i=1,2,...,m,
	\label{P3}
\end{align} where the maps $L, h:  \Omega\times  \mathbb R\to \mathbb R$, $\ell,\ g_i: \Gamma\times \mathbb R\times \mathbb R\to \mathbb R$
are  Carath\'{e}odory  functions fulfilling $L(x,\cdot),\ h(x,\cdot), \ \ell(x', \cdot, \cdot)$ and $g_i(x', \cdot, \cdot)$ with  $i=1,2,...,m$ are of class $C^2$ for a.e.
$x\in\Omega,\ x'\in\Gamma$; the functions $\alpha, \beta: \mathbb{R}\to\mathbb{R}$ are of class
$C^2$ and $m\geq 2$ is a positive integer number. Moreover, $A$ denotes a second-order elliptic operator of the form
$$ Ay(x)=-\sum_{i,j=1}^2 D_j(a_{ij}(x)D_iy(x))+a_0(x)y(x),  $$
where coefficients $a_{ij} \in C^{0,1}(\bar \Omega)$ satisfy $a_{ij}(x)=a_{ji}(x)$, $a_0 \in L^\infty(\Omega),\ a_0(x)\geq 0$ for a.e. $x\in \Omega$, $a_0 \not \equiv 0$, and there exists $C_0 >0$ such that
\begin{align}\label{C0}
	C_0 \Vert \xi \Vert^2 \leq \sum_{i,j=1}^2 a_{ij}\xi_i\xi_j \  \ \forall \xi=(\xi_1,\xi_2) \in \mathbb{R}^2 \quad \text{for a.e.} \quad x\in \Omega, \end{align}
$$ \partial_{\nu} y(x) = \sum_{i,j=1}^2 a_{ij}(x)D_iy(x)\nu_j(x) $$
in which $\nu(x)=(\nu_1(x),\nu_2(x))$ stands for the unit outward normal to $\Gamma$ at the point $x$.
Here, the measure on the boundary $\Gamma$ is the usual $1$-dimensional measure induced by the parametrization (see 
\cite{Troltzsch}). 

Throughout the paper, for each $\lambda \in L^\infty(\Gamma)$ let us denote by $(P(\lambda))$ the problem $\eqref{P1}-\eqref{P3}$,  by $\mathcal{F}(\lambda)$ the
feasible set of  $(P(\lambda))$ and by $\mathcal S(\lambda)$ the solution set of
$(P(\lambda))$.  Given a fixed parameter $\bar \lambda \in  L^\infty(\Gamma)$, we  call $(P(\bar \lambda))$
the unperturbed problem and assume that $\bar z=(\bar y, \bar
u)\in\mathcal{F}(\bar \lambda)$.  The goal of this paper is to study the behavior of $\mathcal S(\lambda)$ when $\lambda$ varies
around $\bar \lambda$ and  to estimate the error $\|\hat y_\lambda-\bar y\|+\|\hat u_\lambda-\bar u\| $ for $(\hat y_\lambda, \hat u_\lambda)\in \mathcal S(\lambda)$.

The solution stability for nonlinear optimal control problems plays a vital role in parameter estimation problems and in numerical
methods of finding optimal solutions (see \cite{Griesse1, Ito,Kien7}). The solution stability ensures that the solution set of perturbed problems is not very far away from the solution set of the unperturbed problems; in other words, the approximate solutions converge to the original solution.

The stability of the solution map to optimal control problems governed by elliptic equations has been studied by several authors recently. For some papers which have a close connection
to our problem,  we refer the readers to \cite{Alt},
\cite{Bonn}, \cite{Griesse}, \cite{Griesse1}, \cite{Kien1}, \cite{Kien5}, \cite{Malnowski1}, \cite{Nhu} and the references given therein. In \cite{Alt, Griesse}, the authors considered  problems, where the objective functionals are quadratic, the state equations and mixed pointwise constraints are linear. Hence the objective functionals are strongly convex and the feasible sets are convex. They showed  that under certain conditions, the solution maps are single-valued  and Lipschitz continuous with respect to parameters in $L^2$-norm (see \cite{Griesse}) and $L^\infty$-norm (see \cite{Alt}). When  the objective functional is not strongly convex or the feasible set is not convex then the solution map may not be singleton (see \cite{Kien1, Son1}). In this case, we have to use some tools of set-valued analysis and variational analysis to deal with these problems. In \cite{Kien1}, the lower semicontinuity of the solution map  with respect to parameters was showed for the distributed control problems. By using the direct method and the first-order necessary optimality conditions, Son al et. \cite{Son1, Son2} proved that the solution maps of the boundary control problems with one piontwise control-state constraint are upper semicontinuous and continuous in parametric.  Recently, Kien al et. \cite{Kien3}  used the no-gap second-order optimality conditions and the metric projections in Hilbert spaces to obtain the locally H\"{older} continuity of the solution map at the reference point not only in $L^2$-norm but also in $L^\infty$-norm of control variables. Notice that the authors in \cite{Kien3} considered the distributed control problem in which the number of constraints is two. 

In this paper, we are going to develop the method in \cite{Kien3} to obtain the same result for the boundary control problem \eqref{P1}--\eqref{P3}. Namely, if $(\bar y, \bar u)$, $(\hat y_\lambda, \hat u_\lambda)$ are locally optimal solutions to the problems $P(\bar \lambda)$ and $P(\lambda)$, respectively, then $\|\hat y_\lambda-\bar y\|_{W^{1,r}(\Omega)}+\|\hat u_\lambda-\bar u\|_{L^\infty(\Gamma)}\leq C\|\lambda-\bar \lambda\|_{L^\infty(\Gamma)}^{1/2}$ with a constant $C>0$ when $\lambda$ varies around $\bar \lambda$.  

We emphasize that the number of constraints in our problem is any positive integer $m$ and so, it may be greater than two. The estimation of  $\|\hat u_\lambda-\bar u\|_{L^\infty(\Gamma)}$, as we see in Section 3, depends on the number of Lagrange multipliers which equals the number of constraints. Therefore, the more constraints the problem has, the more difficult it is to derive some estimations. Consequently, although we follow the method in \cite{Kien3}, some significant improvement in the proof technique will be required. In particular, we have to establish the extra assumption (A5) which seems to generalize the assumption (H4) in \cite{Kien3}.    

The plan of this paper is as follows. In the next section, we set up  assumptions and  state our main result. Section 3 is devoted to the proof of the main result.

\section{Assumptions and the main result}

Hereafter given a Banach space $X$, $x_0\in X$ and $r>0$, let us denote by $X^*$ the dual space of $X$, and by  
$B_X(x_0, r)$, $\bar B_X(x_0, r)$ the open ball and the closed ball with center $x_0$ and radius $r$, respectively. We will write $B(x_0, r)$ and $\bar B(x_0, r)$ when no
confusion. Also, $B_X$ and $\bar B_X$ denote the open unit ball and the closed unit ball, respectively.

Let $\bar z=(\bar y, \bar u)\in W^{1,r}(\Omega)\times L^2(\Gamma)$. For a number $R>0$, we
define
\begin{align*}
	&\mathcal{F}_R(\lambda)=\mathcal{F}(\lambda)\cap B(\bar z, R),\\
	&\mathcal S_R(\lambda)=\big\{(y_\lambda, u_\lambda)\in  \mathcal{F}_R(\lambda)\mid J(y_\lambda, u_\lambda, \lambda)=
	\inf_{(y, u)\in \mathcal{F}_R(\lambda)}J(y,u,\lambda)\big\}.
\end{align*}
In this section, $\varphi:\Omega\times \mathbb{R}\to
\mathbb{R}$ is a function which stands for $L, h$,  and $\psi:\Gamma\times \mathbb{R}\times \mathbb{R}\to
\mathbb{R}$ is a function which stands for $\ell, g_i$ with $i=1,2,...,m$. Given a
couple $(\bar y, \bar u)\in \mathcal{F}(\bar \lambda)$, symbols $\varphi[x],  \varphi_y[x], \varphi[\cdot],  \varphi_y[\cdot]
$, $\psi[x],  \psi_y[x],
\psi[\cdot], \psi_y[\cdot]$. etc, stand for $\varphi(x,\bar y(x)),  \varphi_y(x,\bar y(x)), \varphi(\cdot,\bar y),  \varphi_y(\cdot,\bar y)
$, $ \psi(x, \bar y(x), \bar
\lambda(x))$, $\psi_y(x, \bar y(x),  \bar \lambda(x))$, $\psi(\cdot , \bar y(\cdot),
\bar \lambda(\cdot)),\ \psi_y(\cdot , \bar y(\cdot),
\bar \lambda(\cdot))$, etc, respectively.

We now impose   the following assumptions on $\varphi, \ \psi$, $\alpha$ and
$\beta$.

\begin{enumerate}
\item [(H1)] $\varphi:\Omega\times \mathbb R\to
\mathbb R$ is a Carath\'{e}odory function of class $C^2$ with
respect to the second variable and for each $M>0$, there exists a
positive number $k_{\varphi M}$ such that for a.e. $x\in\Omega$, one has
\begin{align*}
	|\varphi(x, y_1)- \varphi(x, y_2)| +|\varphi_y(x, y_1)- \varphi_y(x, y_2)| + |\varphi_{yy}(x, y_1)-&\varphi_{yy}(x,y_2)| \\
	& \leq k_{\varphi M}|y_1-y_2|
\end{align*} for all $ y_i$ satisfying $|y_i|\leq
M$ with $i= 1,2$. Furthermore, $\varphi(\cdot, 0)$ and $\varphi_y(\cdot, 0)$ belong to $L^\infty(\Omega)$.

\item [(H2)]  $\psi:\Gamma \times \mathbb R\times \mathbb R\to
\mathbb R$ is a Carath\'{e}odory function of class $C^2$ with
respect to the second variable and the third variables, and for each $M>0$, there exists a
positive number $k_{\psi M}$ such that for a.e. $x\in\Gamma$, one has
\begin{align*}
&|\psi(x, y_1, \lambda_1)- \psi(x, y_2, \lambda_2)| +|\psi_y(x, y_1, \lambda_1)- \psi_y(x, y_2, \lambda_2)| \\
&+ |\psi_{yy}(x, y_1,\lambda_1)-\psi_{yy}(x,y_2,\lambda_2)| +| \psi_{\lambda \lambda}(x, y_1, \lambda_1)- \psi_{\lambda \lambda}(x, y_2, \lambda_2)|\\
&+ |\psi_{y\lambda}(x,y_1,\lambda_1)- \psi_{y \lambda}(x, y_2,\lambda_2) |\\
& \leq k_{\psi M}(|y_1-y_2|+	|\lambda_1-\lambda_2|)
\end{align*} for all $ y_i,  \lambda_i \in \mathbb{R}$ satisfying $|y_i|, |\lambda_i|\leq
M$ with $i= 1,2$. Furthermore, $\psi(\cdot, 0, 0)$ and $\psi_y(\cdot, 0, 0)$ belong to $L^\infty(\Gamma)$.

\item [(H3)] The functions $\alpha$ and $\beta$ are of class
$C^2$ and for each $M>0$, there exist numbers
$k_{\alpha M}, k_{\beta M}>0$ such that
\begin{align*}
&|\alpha(\lambda_1)-\alpha(\lambda_2)|\leq k_\alpha |\lambda_1- \lambda_2|,\\
&|\beta(\lambda_1)-\beta(\lambda_2)| \leq k_\beta |\lambda_1- \lambda_2|
\end{align*}
for all $\lambda_i\in\mathbb{R}$ satisfying $|\lambda_i|\leq M$
with  $i=1,2$. Moreover, there exists $\gamma>0$ such that  $\beta(\bar \lambda(x))\geq \gamma$ for a.e. $x\in\Omega$.

\item[(H4)] $h(x, 0)=0$ and $h_y(x, y)\geq 0$, $g_{iy}[x]\geq 0$ for a.e. $x\in\Omega$,  for all
$y\in\mathbb{R}$, $i=1,2,...,m$. 

\item [(H5)] There exist measurable subsets $\Gamma_i$ of $\Gamma$, $i=1,2,...,m$ such that 
\begin{align*}
	&{\rm (i)}\ \Gamma= \bigcup_{i=1}^m\Gamma_i, \\
	&{\rm (ii)}\  \max_{ k\neq i} \operatorname*{ess~sup}\limits_{x\in\Gamma_i}(g_k[x]-g_i[x])< 0. 
\end{align*}
\end{enumerate}

It is noted that,  assumptions $(H1)-(H3)$ make sure that $J$, $h$ and $g_i$, $1\leq i\leq m$ are of class $C^2$ around the referent point, while assumption $(H4)$ guarantees existence and uniqueness of solution to the state equation \eqref{P2}, and that the Robinson constraint qualification condition is valid. Finally, as we will see in Section 3, assumption $(H5)$ is essential for estimations in $L^\infty$-norm of control variable.   

Let $s$ be the adjoint number of $r$, that is, $\frac{1}{r}+\frac{1}{s}=1$ and
$\tau: W^{1,s}(\Omega) \to W^{\frac{1}{r}, s}(\Gamma)$ be the trace operator. Recall that for each given $u \in W^{-\frac{1}{r},r}(\Gamma)$, a function $y \in W^{1,r}(\Omega)$
is said to be a (weak or variational) solution of \eqref{P2} if
\begin{multline*}
	\int_{\Omega}\sum_{i,j=1}^2 a_{ij}(x)D_iy(x)D_jv(x)dx+\int_{\Omega}a_0(x)y(x)v(x)dx+\int_{\Omega}h(x,y(x))v(x)dx\\
	=\int_{\Gamma}(u(x)+\lambda(x)) \tau v(x))ds
\end{multline*}
for all $v \in W^{1,s}(\Omega)$.  
It is known from \cite[ Theorem 3.1]{Son2} that  under the assumption $(H4)$,  for each $u\in L^2(\Gamma)$ and $\lambda \in
L^\infty(\Gamma)$, the equation \eqref{P2} has a unique solution $y\in W^{1,r}(\Omega)$ and there exists a
 constant $C_1>0$ such that
\begin{align}
	\|y\|_{W^{1,r}(\Omega)}\leq C_1(\|u\|_{L^2(\Gamma)} +\|\lambda\|_{L^2(\Gamma)}). \label{state}
\end{align} 

From $(H1)-(H3)$, it follows that for each fixed parameter  $\lambda \in B_{L^\infty(\Gamma)} (\bar \lambda,\epsilon_1)$ and for any $(\hat y, \hat u)\in \mathcal{F}(\lambda)$ and $(y,u)\in W^{1,r}(\Omega)\times L^2(\Gamma)$, we have
\begin{align*}
\langle \nabla_z J(\hat y, \hat u,\lambda), (y,u)\rangle =&\int_\Omega L_y(x,\hat y(x))y(x)dx+ \\ &\quad  \int_\Gamma\big(\ell_y(x, \hat y(x), \lambda(x))y(x)+ \alpha(\lambda(x))u(x)+\beta(\lambda(x))\hat u(x) u(x) \big)ds
\end{align*}
and
\begin{align*}
 \langle \nabla_z J(\bar y, \bar u, \bar \lambda), (y,u)\rangle=&\int_\Omega L_y(x,\bar y(x))y(x)dx+ \\ &\quad \int_\Gamma\big(\ell_y(x, \bar y(x),\bar \lambda(x))y(x)+ \alpha(\bar \lambda(x))u(x)+\beta(\bar \lambda(x))\bar u(x) u(x) \big)ds.
\end{align*}

\begin{definition}\label{DefCrD}
 	We say that a couple $(y, u)\in W^{1,r}(\Omega) \times L^2(\Gamma)$ is a critical direction for $(P(\bar \lambda))$ at $(\bar y,\bar  u)$ if the following conditions are fulfilled:

	\noindent $(i)$ $ \nabla_z J(\bar y,\bar  u, \bar \lambda), (y,u)\leq 0$,

	\noindent $(ii)$ $\begin{cases}
		A y + h_y[\cdot]y =0 &\text{ in } \Omega,\\
		\partial_\nu y=u &\text{ on } \Gamma, \end{cases} $ 
	
	\noindent $(iii)$ $g_{iy}[x]y(x) +u(x)\leq 0$
	whenever  $g_i[x]+ \bar u(x)=0$,\ $i=1,2,...,m$. 
\end{definition}
Denote by $\mathcal{C}[(\bar y,
\bar u), \bar \lambda]$ the set of critical directions of $(P(\bar \lambda))$ at
$\bar{z}=(\bar y,\bar  u)$. It is clear that $\mathcal{C}[(\bar y, \bar u), \bar \lambda]$ is a closed and convex cone.

\begin{definition}\label{LagrangeMul}
We say that the functions $\vartheta_\lambda \in W^{1,s}(\Omega)$ and  $e_{ \lambda  1}, e_{\lambda  2},..., e_{ \lambda  m} \in L^2(\Gamma)$ are Lagrange multipliers of $(P(\lambda))$ at $(\hat  y_\lambda, \hat  u_\lambda)$  if they satisfy the following conditions:

\noindent $(i)$ The  adjoint equation:
\begin{align}\label{adj}\begin{cases}
	-A \vartheta_\lambda +h_y(\cdot, \hat  y_\lambda)\vartheta_\lambda=-L_y(\cdot,\hat  y_\lambda) &\text{ in } \Omega \\
	\partial_\nu \vartheta_\lambda =-\ell_y(\cdot,\hat  y_\lambda)-\sum_{i=1}^{m}g_{iy}(\cdot,\hat y_\lambda, \lambda)e_{\lambda i}  &\text{ on } \Gamma 
\end{cases} \end{align} 

\noindent $(ii)$ The stationary condition in $u$:
\begin{align}\label{sta}			
-\vartheta_\lambda +\alpha(\lambda)+\beta(\lambda)\hat  u_\lambda +\sum_{i=1}^{m}e_{\lambda i}=0;
\end{align}

\noindent $(iii)$ The complementary conditions: 
	\begin{align}\label{com}	
     e_{\lambda i}(x)\geq 0, \quad e_{\lambda i}(x)(g_i(x,\hat  y_\lambda(x), \lambda(x))+ \hat  u_\lambda(x))=0
	\end{align} for each $i\in \{1,2,...,m\}$ and for a.e. $x\in\Omega$.  
\end{definition}
 Let us denote by $\Lambda[(\hat  y_\lambda, \hat  u_\lambda),  \lambda]$ and $\Lambda[(\bar y, \bar u),  \bar \lambda]$
the sets of Lagrange multipliers of $(P(\lambda))$ at $(\hat  y_\lambda, \hat  u_\lambda)$   and $(P(\bar \lambda))$ at $(\bar y, \bar u)$, respectively. 

\begin{definition}
We say that $\bar z=(\bar y, \bar u)$ is a locally strongly optimal solution of $(P(\bar \lambda))$ if  there exist numbers $\delta>0$ and $\sigma>0$ such that 
\begin{align*}
	J(z, \bar \lambda)\geq J(\bar z, \bar \lambda) +\sigma\|z-\bar z\|^2\quad \forall z\in B(\bar z, \delta)\cap\mathcal{F}(\bar \lambda). 
\end{align*}	
	\end{definition}
   We now state the main result of the paper.

\begin{theorem} \label{Theoremkey}
Suppose that the assumptions $(H1)-(H4)$ are fulfilled and $(\vartheta, e_1, e_2,..., e_m)\in \Lambda[(\bar y, \bar u) , \bar \lambda]$ such that
\begin{align} \label{StrictSOC}
\int_\Omega & \big(L_{yy}[x]y^2(x)+\vartheta h_{yy}[x]y^2(x) \big)dx  \notag \\ &+ \int_\Gamma\Big( \ell_{yy}[x]y^2(x)+  \beta (\bar{\lambda}(x))u^2(x) + 
\sum_{i=1}^{m}e_i(x) g_{iyy}[x]y^2(x) \Big) ds >0
\end{align} for all $(y, u)\in \mathcal{C}[(\bar y, \bar u), \bar \lambda]\setminus\{(0,0)\}$. Then $(\bar y,\bar u)$ is a locally strongly optimal solution of $(P(\bar \lambda))$ and there exist positive numbers $R_0, s_0$ and $l_0$ such that for all $\lambda \in B_{L^{\infty}(\Gamma)}(\bar \lambda, s_0)$,   every couple $(\hat y_\lambda, \hat u_\lambda)\in \mathcal S_{R_0}(\lambda)$ is a locally optimal solution of $(P(\lambda))$,   $\hat u_\lambda \in L^\infty(\Gamma)$  and
\begin{align}\label{HolderCond1}
\|\hat y_\lambda-\bar y\|_{W^{1,r}(\Omega)} + \|\hat u_\lambda-\bar u\|_{{L^2(\Gamma)}} \leq  l_0\|\lambda-\bar \lambda\|_{L^\infty(\Gamma)}^{1/2}.
\end{align} In addition, if $(H5)$ is satisfied then  there exist positive numbers $s_1$ and $l_1$ such that
for all $\lambda \in B_{L^{\infty}(\Gamma)}(\bar \lambda, s_1)$ and   $(\hat y_\lambda, \hat u_\lambda)\in \mathcal S_{R_0}(\lambda)$ one has
\begin{align}\label{HolderCond2}
	\|\hat y_\lambda-\bar y\|_{W^{1,r}(\Omega)}  + \|\hat u_\lambda-\bar u\|_{L^\infty(\Gamma)} \leq  l_1\|\lambda-\bar \lambda\|_{L^\infty(\Gamma)}^{1/2}.
\end{align}
\end{theorem}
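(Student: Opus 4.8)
The plan is to argue in three stages: first derive the quadratic growth of $\bar z$ from \eqref{StrictSOC} by the standard contradiction argument; then combine it with a one-sided feasibility transfer to obtain \eqref{HolderCond1}; finally use $(H5)$ to decouple, on each $\Gamma_i$, the adjoint state from the Lagrange multipliers and push the estimate up to \eqref{HolderCond2}. Three facts are used throughout: the compact embedding $W^{1,r}(\Omega)\hookrightarrow C(\bar\Omega)$ (valid since $r>2$); the Lipschitz form of \eqref{state}, i.e. $\|y_1-y_2\|_{W^{1,r}(\Omega)}\le C\|(u_1+\lambda_1)-(u_2+\lambda_2)\|_{L^2(\Gamma)}$ for solutions of \eqref{P2}, obtained from \cite[Theorem 3.1]{Son2} applied to the linear equation solved by $y_1-y_2$, whose bounded zero order coefficient $\int_0^1 h_y(\cdot,y_2+t(y_1-y_2))\,dt$ is nonnegative; and the fact that, by $(H4)$, Robinson's constraint qualification holds uniformly around $(\bar z,\bar\lambda)$, so Lagrange multipliers exist and stay bounded in $W^{1,r}(\Omega)\times L^2(\Gamma)$ for $\lambda$ near $\bar\lambda$ and feasible points near $\bar z$.

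\emph{Stage 1.} If $\bar z$ were not locally strongly optimal there would be $z_k=(y_k,u_k)\in\mathcal F(\bar\lambda)\setminus\{\bar z\}$ with $z_k\to\bar z$ and $J(z_k,\bar\lambda)<J(\bar z,\bar\lambda)+\frac1k\|z_k-\bar z\|^2$. Set $t_k=\|u_k-\bar u\|_{L^2(\Gamma)}>0$, $v_k=(u_k-\bar u)/t_k$, $w_k=(y_k-\bar y)/t_k$; then $\|v_k\|_{L^2(\Gamma)}=1$, $\|w_k\|_{W^{1,r}(\Omega)}\le C$, and, along a subsequence, $v_k\rightharpoonup v$ in $L^2(\Gamma)$, $w_k\rightharpoonup w$ in $W^{1,r}(\Omega)$, $w_k\to w$ in $C(\bar\Omega)$. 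Passing to the limit in the differenced state equation gives Definition \ref{DefCrD}$(ii)$; feasibility of $z_k$ with a first order expansion of the $g_i$ gives Definition \ref{DefCrD}$(iii)$; and $J(z_k,\bar\lambda)<J(\bar z,\bar\lambda)+o(t_k^2)$ together with the first order condition from $(\vartheta,e_1,\dots,e_m)$ gives Definition \ref{DefCrD}$(i)$; hence $(w,v)\in\mathcal C[(\bar y,\bar u),\bar\lambda]$. For the Lagrangian $\mathcal L(\cdot,\bar\lambda)$ of $(\vartheta,e_1,\dots,e_m)$ one has $\mathcal L(\bar z,\bar\lambda)=J(\bar z,\bar\lambda)$, $\mathcal L(z_k,\bar\lambda)\le J(z_k,\bar\lambda)$ (by feasibility and complementarity), and $\nabla_z\mathcal L(\bar z,\bar\lambda)=0$ (by \eqref{adj}--\eqref{sta}); a second order expansion then gives $\langle\nabla^2_{zz}\mathcal L(\bar z,\bar\lambda)(w_k,v_k),(w_k,v_k)\rangle\le o(1)$, where the left side is precisely the bilinear form in \eqref{StrictSOC} evaluated at $(w_k,v_k)$. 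Its $y$-part is continuous under $w_k\to w$ in $C(\bar\Omega)$ and $\int_\Gamma\beta(\bar\lambda)v_k^2$ is weakly lower semicontinuous and $\ge\gamma\|v_k\|_{L^2(\Gamma)}^2$; so $\langle\nabla^2_{zz}\mathcal L(\bar z,\bar\lambda)(w,v),(w,v)\rangle\le0$, forcing $(w,v)=(0,0)$ by \eqref{StrictSOC}, and then $\int_\Gamma\beta(\bar\lambda)v_k^2\to0$, contradicting $\|v_k\|_{L^2(\Gamma)}=1$. Thus $\bar z$ is locally strongly optimal; fix $\delta,\sigma>0$ and put $R_0=\delta/2$.

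\emph{Stage 2.} The feasibility transfer uses $(H4)$: if $(y,u)$ is feasible for $P(\mu)$ and $\mu'$ is another parameter, put $u'=u-c$ with $c=\max\{1,k_g\}\|\mu-\mu'\|_{L^\infty(\Gamma)}$, $k_g$ a Lipschitz constant common to the $g_i$, and let $y'$ solve \eqref{P2} for $(u',\mu')$; since $h_y\ge0$ and $u'+\mu'\le u+\mu$, the maximum principle gives $y'\le y$, so, using $g_{iy}[\cdot]\ge0$ (and continuity) and feasibility, $g_i(\cdot,y',\mu')+u'\le g_i(\cdot,y,\mu)+u+k_g\|\mu-\mu'\|_{L^\infty(\Gamma)}-c\le0$, while $\|(y',u')-(y,u)\|\le C\|\mu-\mu'\|_{L^\infty(\Gamma)}$. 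Applied to $\bar z$ with $(\mu,\mu')=(\bar\lambda,\lambda)$ this gives $\tilde z_\lambda\in\mathcal F_{R_0}(\lambda)$ (so $\mathcal F_{R_0}(\lambda)\ne\emptyset$; the direct method then yields $\mathcal S_{R_0}(\lambda)\ne\emptyset$, using that $J(\cdot,\lambda)$ is weakly lower semicontinuous and $\mathcal F(\lambda)\cap\bar B(\bar z,R_0)$ weakly compact), and applied to $\hat z_\lambda\in\mathcal S_{R_0}(\lambda)$ with $(\mu,\mu')=(\lambda,\bar\lambda)$ it gives $\hat z_\lambda'\in\mathcal F(\bar\lambda)\cap B(\bar z,\delta)$; both lie within $C\|\lambda-\bar\lambda\|_{L^\infty(\Gamma)}$ of the original point. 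Using that $J$ is jointly locally Lipschitz in $(z,\lambda)$ (by $(H1)$--$(H3)$), quadratic growth at $\bar z$ applied to $\hat z_\lambda'$ yields
\begin{align*}
\sigma\|\hat z_\lambda'-\bar z\|^2&\le J(\hat z_\lambda',\bar\lambda)-J(\bar z,\bar\lambda)\le J(\hat z_\lambda,\lambda)-J(\bar z,\bar\lambda)+C\|\lambda-\bar\lambda\|_{L^\infty(\Gamma)}\\
&\le J(\tilde z_\lambda,\lambda)-J(\bar z,\bar\lambda)+C\|\lambda-\bar\lambda\|_{L^\infty(\Gamma)}\le C'\|\lambda-\bar\lambda\|_{L^\infty(\Gamma)},
\end{align*}
the middle step using $J(\hat z_\lambda,\lambda)=\inf_{\mathcal F_{R_0}(\lambda)}J(\cdot,\lambda)\le J(\tilde z_\lambda,\lambda)$. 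Hence $\|\hat z_\lambda'-\bar z\|\le C\|\lambda-\bar\lambda\|_{L^\infty(\Gamma)}^{1/2}$, so $\|\hat z_\lambda-\bar z\|\le l_0\|\lambda-\bar\lambda\|_{L^\infty(\Gamma)}^{1/2}$ for $\|\lambda-\bar\lambda\|_{L^\infty(\Gamma)}<s_0$; after shrinking $s_0$ so that $l_0 s_0^{1/2}<R_0$, $\hat z_\lambda\in B(\bar z,R_0)$ is a locally optimal solution of $P(\lambda)$. Finally $\hat u_\lambda\in L^\infty(\Gamma)$: at a.e. $x$ either some constraint is active and $\hat u_\lambda(x)=-g_i(x,\hat y_\lambda(x),\lambda(x))$, or none is and $\hat u_\lambda(x)=(\vartheta_\lambda(x)-\alpha(\lambda(x)))/\beta(\lambda(x))$ by \eqref{com}--\eqref{sta}, both bounded since $\hat y_\lambda\in C(\bar\Omega)$ and $\vartheta_\lambda\in W^{1,r}(\Omega)\hookrightarrow C(\bar\Omega)$ (the datum in \eqref{adj} lies in $L^2(\Gamma)$). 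This proves \eqref{HolderCond1}.

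\emph{Stage 3 and the main obstacle.} Assume $(H5)$. The $\Gamma_i$ are essentially pairwise disjoint (else $g_j[x]<g_i[x]$ and $g_i[x]<g_j[x]$ for $x\in\Gamma_i\cap\Gamma_j$). On $\Gamma_i$ the constraints $k\ne i$ are strictly inactive for $\bar z$, since $g_k[x]+\bar u(x)\le(g_k[x]-g_i[x])+(g_i[x]+\bar u(x))<0$, so $e_k=0$ there; hence the datum in \eqref{adj} is bounded, $\vartheta\in C(\bar\Omega)$ and $e_i,\bar u\in L^\infty(\Gamma)$. By \eqref{HolderCond1} and the embedding, $\|\hat y_\lambda-\bar y\|_{C(\bar\Omega)}\le C\|\lambda-\bar\lambda\|_{L^\infty(\Gamma)}^{1/2}$, and the same computation shows that for $\|\lambda-\bar\lambda\|_{L^\infty(\Gamma)}\le s_1$ (some small $s_1\le s_0$) the constraints $k\ne i$ stay strictly inactive on $\Gamma_i$ for $(\hat y_\lambda,\hat u_\lambda)$, so $e_{\lambda k}=0$ on $\Gamma_i$ for $k\ne i$; then \eqref{sta} and \eqref{com} give $\hat u_\lambda=\min\{(\vartheta_\lambda-\alpha(\lambda))/\beta(\lambda),\,-g_i(\cdot,\hat y_\lambda,\lambda)\}$ on $\Gamma_i$, with the analogous formula for $\bar u$. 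Since $\min$ is $1$-Lipschitz, on $\Gamma_i$
\begin{align*}
|\hat u_\lambda-\bar u|\le C\big(\|\vartheta_\lambda-\vartheta\|_{C(\bar\Omega)}+\|\lambda-\bar\lambda\|_{L^\infty(\Gamma)}\big),
\end{align*}
so it suffices to bound $q_\lambda:=\vartheta_\lambda-\vartheta$ in $C(\bar\Omega)$. Subtracting the systems \eqref{adj}, $-Aq_\lambda+h_y(\cdot,\hat y_\lambda)q_\lambda=\tilde F_1$ in $\Omega$ and $\partial_\nu q_\lambda=\tilde F_2-\sum_k g_{ky}(\cdot,\hat y_\lambda,\lambda)(e_{\lambda k}-e_k)$ on $\Gamma$, with $\|\tilde F_1\|_{L^\infty(\Omega)},\|\tilde F_2\|_{L^\infty(\Gamma)}\le C\|\lambda-\bar\lambda\|_{L^\infty(\Gamma)}^{1/2}$ (using $\vartheta,e_k,\bar u\in L^\infty$). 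On $\Gamma_i$ only the $k=i$ term survives, and \eqref{sta} gives $e_{\lambda i}-e_i=q_\lambda-(\alpha(\lambda)-\alpha(\bar\lambda))-\beta(\lambda)(\hat u_\lambda-\bar u)-(\beta(\lambda)-\beta(\bar\lambda))\bar u$; moving the resulting $g_{iy}(\cdot,\hat y_\lambda,\lambda)q_\lambda$ into the boundary condition turns it into $\partial_\nu q_\lambda+b\,q_\lambda=\tilde F_2+\tilde F_3$, where $b(x)=g_{iy}(x,\hat y_\lambda(x),\lambda(x))$ on $\Gamma_i$ satisfies $b\ge-C\|\lambda-\bar\lambda\|_{L^\infty(\Gamma)}^{1/2}$ (by $(H4)$ and continuity) and $\|\tilde F_3\|_{L^2(\Gamma)}\le C\|\lambda-\bar\lambda\|_{L^\infty(\Gamma)}+C\|\hat u_\lambda-\bar u\|_{L^2(\Gamma)}\le C\|\lambda-\bar\lambda\|_{L^\infty(\Gamma)}^{1/2}$ by \eqref{HolderCond1}. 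Because $a_0\ge0$, $a_0\not\equiv0$, $h_y\ge0$ and $b$ is almost nonnegative, this problem is coercive with constant independent of $\lambda$; splitting $q_\lambda$ according to whether the boundary datum is the $L^\infty$ part $\tilde F_2$ or the $L^2$ part $\tilde F_3$ and invoking \cite[Theorem 3.1]{Son2} gives $\|q_\lambda\|_{W^{1,r}(\Omega)}\le C\|\lambda-\bar\lambda\|_{L^\infty(\Gamma)}^{1/2}$, hence $\|q_\lambda\|_{C(\bar\Omega)}\le C\|\lambda-\bar\lambda\|_{L^\infty(\Gamma)}^{1/2}$, and \eqref{HolderCond2} follows. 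The crux — and where the argument goes beyond \cite{Kien3} — is exactly this last step: $e_{\lambda i}-e_i$ appears in the adjoint boundary condition and, by \eqref{sta}, is of the same order as $q_\lambda$, so a plain energy estimate only returns the vacuous $\|q_\lambda\|\le C(\|\lambda-\bar\lambda\|_{L^\infty(\Gamma)}^{1/2}+\|q_\lambda\|)$; it is $(H5)$, which collapses the multiplier term on $\Gamma_i$ to a single absorbable summand of controlled sign for all $m$ constraints simultaneously, combined with the already established $L^2$ estimate for $\hat u_\lambda-\bar u$, which lets $\tilde F_3$ be bounded without $q_\lambda$, that breaks the loop.
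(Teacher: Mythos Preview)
Your argument is correct in outline, and Stages~1 and~3 closely mirror the paper's own proof (Lemma~\ref{key2} and the final lemma respectively). The one substantive difference is in Stage~2. The paper does not build a feasibility transfer by hand; instead it verifies the hypotheses of an abstract H\"older-stability result, \cite[Proposition~3.1]{Kien3}, by checking Robinson's constraint qualification (Lemma~\ref{key1}$(iv)$), and then invokes that proposition directly. Your direct route --- shift $u$ downward by $c\sim\|\mu-\mu'\|_{L^\infty}$, use the comparison principle for the state, and absorb the constraint mismatch --- is more elementary and self-contained, but it leans on ``$g_{iy}[\cdot]\ge 0$ (and continuity)'' at points other than the reference, whereas $(H4)$ only gives $g_{iy}[x]\ge 0$ at $(\bar y,\bar\lambda)$. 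This is fixable (the error is $O(\epsilon\,(c+\|\mu-\mu'\|))$ for $\epsilon$ small once the neighborhood is shrunk, and can be absorbed into $c$), but it is more delicate than your parenthetical suggests. The paper's Robinson-CQ verification sidesteps this because it works with the \emph{linearized} constraint at the reference, where $g_{iy}[\cdot]\ge 0$ holds exactly.

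A parallel remark applies to Stage~3: you take $b=g_{iy}(\cdot,\hat y_\lambda,\lambda)$ as the Robin coefficient, which is only ``almost nonnegative'', and then appeal to coercivity with constants independent of $\lambda$. The paper instead keeps $g_{iy}[\cdot]$ (exactly $\ge 0$ by $(H4)$) as the boundary coefficient and moves the difference $(g_{iy}(\cdot,\hat y_\lambda,\lambda)-g_{iy}[\cdot])\chi_i\vartheta_\lambda$ into the right-hand side $I_\Gamma$, where it is harmless since $\vartheta_\lambda$ is bounded. This lets the paper invoke the estimate \eqref{state} without any perturbation argument for the Robin problem. Otherwise your Stage~3 is the same idea as the paper's: use $(H5)$ to kill $e_{\lambda k}$ on $\Gamma_i$ for $k\ne i$, substitute \eqref{sta} for the surviving multiplier, and exploit the already-established $L^2$ bound on $\hat u_\lambda-\bar u$ to close the estimate for $\vartheta_\lambda-\vartheta$; the paper phrases the final pointwise step via the metric projection $P_{(-\infty,0]}$ rather than $\min$, but these are equivalent.
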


\section{Proof of the main result}

In this section, we present the proof of Theorem \ref{Theoremkey}.  The proof will be divided into a sequence of lemmas. 

We begin by proving the solution stability in $L^2$-norm of control variable. To this end, we shall transfer problem $P(\lambda)$ to a mathematical programming problem and apply \cite[Proposition 3.1]{Kien3}. For this, we put
\begin{align*}
	& Y:=W^{1,r}(\Omega), \ U:=L^2(\Gamma),\ Z:=Y \times U, \Pi=L^\infty(\Gamma),\\
	& E_0:=(W^{1,s}(\Omega))^*,\ E:=(L^2(\Gamma))^m, K=\prod_{i=1}^{m}K_i 
\end{align*}  where
\begin{align*}
K_i=\{v\in L^2(\Gamma) \mid v(x)\leq 0 \quad  \text{a.e. } x\in \Gamma \}, \quad i=1,2,...,m.
\end{align*}

These spaces are separable and reflexive Banach space and $K$ is a unbounded, convex and closed subset in $E$. Let us define mappings 
\begin{align*}
 &H:  Y\times U\times \Pi\to E_0,\  H(y,u,\lambda)=  \sum_{i,j=1}^{2}a_{ij}D_iyD_j +a_0y +h(\cdot,y) +\tau^*(u+\lambda),\\
& G: Y\times U\times \Pi\to E,\  G(y,u, \lambda)=(G_1(y, u, \lambda), G_2(y, u, \lambda),...,G_m(y, u, \lambda)),
\end{align*} where  $\tau^* $  is the adjoint operator of the trace operator $ \tau $,  and 
$$\langle H(y; u; \lambda); v \rangle := \int_\Omega \big(\sum_{i,j=1}^{2}a_{ij}D_iyD_jv +a_0yv +h(x,y)v\big)dx +\int_\Gamma(u+\lambda)\tau vds \ \forall  v\in W^{1,s}(\Omega), $$\\ 
and $G_i(y, u, \lambda):=g_i(\cdot,y,\lambda)+u$, $i=1,2,...,m$.
 Also, we define the set 
 \begin{equation*}
 	D(\lambda)=\{z=(y,u)\in Y\times U | H(y,u,\lambda)=0\}.\end{equation*} 

Since $r>2$, $Y\hookrightarrow C(\bar \Omega)$. Moreover, analysis similar to that in \cite{Son} shows that the mappings $F$ and $G$ are well-defined.

Therefore, the  problem $(P(\lambda))$ can be written in the following form 
\begin{equation}\notag
	(P(\lambda))\quad\quad\quad
	\begin{cases}
		& J(y,u,\lambda)\to \min,\\
		& H(y, u, \lambda)=0, \\
		& G(y, u, \lambda)\in K,\\
	\end{cases}
\end{equation} or in the simpler form
\begin{equation}\notag
	(P(\lambda))\quad\quad\quad
	\begin{cases}
		& J(z, \lambda)\to \min,\\
		&z\in\mathcal{F}(\lambda),
	\end{cases}
\end{equation}
where $\mathcal{F}(\lambda):=D(\lambda)\cap G^{-1}(K)$ is the feasible set of problem $P(\lambda)$.

Recall that for given a closed set $D$ in $Z$ and a point $z\in D$, {\it the adjacent tangent cone} and
{\it the contingent cone} to $D$ at $z$ are defined by 
\begin{align*}
	T^{\flat}(D, z)&=\{h\in Z|\forall t_n\to 0^+, \exists h_n\to h, z+ t_nh_n\in D\},\\
	T(D, z)&=\{h\in Z|\exists t_n\to 0^+, \exists h_n\to h, z+ t_nh_n\in D\},
\end{align*}respectively. These cones are closed and $T^{\flat}(D, z)\subseteq T(D, z).$
It is well known that when $D$ is convex,  then
$$
T^{\flat}(D, z)= T(D, z)=\overline{{\rm cone}}(D-z)
$$ and the normal cone to $D$ at $z$ is given by
$$
N(D, z)=\{z^*\in Z^*\mid \langle z^*, d-z\rangle\leq 0\ \forall d\in
D\},
$$ where ${\rm cone}(D-z):=\{k(d-z): d\in D, k>0\}$ is the cone generated by $(D-z)$. 

\begin{lemma} \label{key1}
	If the assumptions $(H1)$-$(H4)$ are fulfilled then the following conditions hold: 
	\begin{enumerate}
		\item [$(i)$] There exist positive numbers $r_1, r_1', r_1''$
		such that for any $\lambda \in B_\Pi(\bar \lambda, r_1'')$, the mapping $J(\cdot,\cdot, \lambda)$, $H(\cdot, \cdot, \lambda)$ and $G(\cdot, \cdot, \lambda)$ are twice Fr\'{e}chet differentiable
		on $B_Y(\bar{y}, r_1)\times B_U(\bar{u}, r_1')$. The mapping $H(\cdot, \cdot, \cdot)$ is continuously Fr\'{e}chet differentiable on
		$B_Y(\bar y, r_1)\times B_U( \bar u, r_1')\times B_\Pi(\bar \lambda, r_1'')$.
		
		\item [$(ii)$] There exist constants $k_G>0$ and $k_J>0$ such that
		\begin{align*}
			&\|G(z_1, \lambda_1)-G(z_2,  \lambda_2)\|+\|\nabla G(z_1, \lambda_1)-\nabla G(z_2,  \lambda_2)\|\leq k_G(\|z_1-z_2\|+ \| \lambda_1- \lambda_2\|),\\
			&\|J(z_1, \lambda_1)-J(z_2, \lambda_2)\|+\|\nabla J(z_1, \lambda_1)-\nabla J(z_2, \lambda_2)\|\leq k_J(\|z_1-z_2\|+ \|\lambda_1-\lambda_2\|)
		\end{align*} for all $z_1, z_2\in B_Y(\bar y, r_1)\times B_U(\bar u, r_1')$ and $\lambda_1, \lambda_2\in B_\Pi(\bar \lambda, r_1'').$
		
		\item [$(iii)$] The mapping $H_y(\bar z,\bar  \lambda)$ is  bijective.

		\item [$(iv)$] $E= \nabla_z G(\bar z, \bar \lambda)(T(D(\bar \lambda), \bar z)) -\cone (K-G(\bar z, \bar \lambda)).
		$
		
	\end{enumerate}	
 \end{lemma}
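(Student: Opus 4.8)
The plan is to verify the four conditions of Lemma~\ref{key1} one by one, since each is a structural fact about the data of $(P(\lambda))$ rather than a deep result; the work is in checking that the hypotheses $(H1)$--$(H4)$ give exactly what is needed. For $(i)$, I would use the embedding $W^{1,r}(\Omega)\hookrightarrow C(\bar\Omega)$ (valid since $r>2$): if $y$ ranges over a ball $B_Y(\bar y,r_1)$, then $\|y\|_{C(\bar\Omega)}$ is bounded by some $M$, so $(H1)$ applies to $L$ and $h$ and $(H2)$ applies to $\ell$ and the $g_i$ on the boundary (where the trace of $y$ and the parameter $\lambda$ are both bounded, using $\lambda\in B_\Pi(\bar\lambda,r_1'')$). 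The Nemytskii operators associated with $C^2$ integrands satisfying these local Lipschitz bounds on their second derivatives are twice Fr\'echet differentiable between the relevant $L^p$/$C$ spaces; the quadratic-in-$u$ term $\tfrac12\beta(\lambda)u^2$ and the bilinear terms are handled by $(H3)$. Continuous differentiability of $H$ jointly in $(y,u,\lambda)$ follows because $H$ is affine in $(u,\lambda)$ and the only genuinely nonlinear piece is $h(\cdot,y)$, whose derivative $h_y(\cdot,y)$ depends continuously on $y\in C(\bar\Omega)$ by $(H1)$.

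For $(ii)$ I would again localize: on the balls fixed in $(i)$ the arguments of all integrands stay in a fixed compact set, so the global Lipschitz estimates in $(H1)$--$(H3)$ for $\varphi$, $\varphi_y$, $\varphi_{yy}$, $\psi$, $\psi_y$, $\psi_{yy}$, $\psi_{y\lambda}$, $\psi_{\lambda\lambda}$, $\alpha$, $\beta$ translate directly into Lipschitz estimates for $G$, $\nabla G$, $J$, $\nabla J$ in the stated norms, with the $C(\bar\Omega)$-bound on $y$ absorbing the difference between $\|y_1-y_2\|_{W^{1,r}}$ and the pointwise differences. This is essentially a bookkeeping exercise. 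For $(iii)$, the claim is that $H_y(\bar z,\bar\lambda):W^{1,r}(\Omega)\to (W^{1,s}(\Omega))^*$ is bijective; but $H_y(\bar z,\bar\lambda)y$ is precisely the weak form of the linearized boundary-value problem $Ay+h_y[\cdot]y=0$ in $\Omega$, $\partial_\nu y = $ (data) on $\Gamma$, and by $(H4)$ we have $h_y[\cdot]\ge 0$, so the bilinear form is coercive (using the ellipticity constant $C_0$ from \eqref{C0} and $a_0\not\equiv0$, $a_0\ge0$); hence \cite[Theorem 3.1]{Son2} (cited in the excerpt for exactly this linear solvability, with the $W^{1,r}$ estimate \eqref{state}) gives the unique solvability, i.e. bijectivity, and the open mapping theorem gives boundedness of the inverse.

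The main obstacle is $(iv)$, the Robinson-type constraint qualification $E=\nabla_z G(\bar z,\bar\lambda)\bigl(T(D(\bar\lambda),\bar z)\bigr)-\cone\bigl(K-G(\bar z,\bar\lambda)\bigr)$. Here the strategy is: first identify $T(D(\bar\lambda),\bar z)=\ker H_z(\bar z,\bar\lambda)=\{(y,u): Ay+h_y[\cdot]y=0,\ \partial_\nu y = u\}$ — the inclusion $\subseteq$ is standard, and the reverse inclusion uses surjectivity of $H_y(\bar z,\bar\lambda)$ from $(iii)$ together with the implicit function theorem to produce, for each admissible $u$, a curve in $D(\bar\lambda)$. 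The point is that $u$ can be chosen \emph{freely} in $L^2(\Gamma)$ and then $y$ is determined; so $\nabla_z G(\bar z,\bar\lambda)(T(D(\bar\lambda),\bar z))$ already contains all elements of the form $\bigl(g_{iy}[\cdot]y+u\bigr)_{i=1}^m$ with $u\in L^2(\Gamma)$ arbitrary. Given any target $w=(w_1,\dots,w_m)\in E=(L^2(\Gamma))^m$, I would decompose $w = (g_{iy}[\cdot]y+u)_i + v$ where $v\in\cone(K-G(\bar z,\bar\lambda))$; concretely, pick $u$ and the corresponding $y$ so that for some fixed index (or using a measurable selection of indices where the constraints are active versus inactive) the component $w_i - g_{iy}[\cdot]y - u$ lands in $-K_i$ scaled appropriately — this is where $g_{iy}[\cdot]\ge0$ from $(H4)$ and the fact that $K_i$ is the nonpositive cone matter, since on the set where $G_i(\bar z,\bar\lambda)=g_i[\cdot]+\bar u<0$ the cone $\cone(K_i - G_i(\bar z,\bar\lambda))$ is all of $L^2$, and on the active set one exploits the freedom in $u$. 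I expect the careful handling of the $m$ coupled constraint components — making the decomposition simultaneously for all $i$, possibly by choosing $u$ to dominate, e.g. $u$ very negative so that all $G_i$ become strictly negative and the cone term becomes the whole space — to be the delicate point, and this is presumably where the paper's remark about the proof depending on the number of multipliers/constraints first bites.
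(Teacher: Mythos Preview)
Your treatment of $(i)$--$(iii)$ is correct and essentially coincides with the paper's: $(i)$ and $(ii)$ are bookkeeping from $(H1)$--$(H3)$ together with the embedding $W^{1,r}(\Omega)\hookrightarrow C(\bar\Omega)$, and $(iii)$ is exactly unique solvability of the linearized Robin problem, available because $h_y[\cdot]\ge 0$.

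For $(iv)$ your framework is right --- identify $T(D(\bar\lambda),\bar z)=\ker\nabla_z H(\bar z,\bar\lambda)$ via surjectivity of $H_y$, then for each $f=(f_1,\dots,f_m)\in E$ produce $(y,u)$ in the tangent cone and $v_i\in\cone(K_i-G_i(\bar z,\bar\lambda))$ with $f_i=g_{iy}[\cdot]y+u-v_i$ --- but the concrete mechanism you sketch does not close. Two issues: first, ``the cone is all of $L^2$ on the inactive set'' is not literally true, since $v_i\in\cone(K_i-G_i)$ means $v_i\le -tG_i$ a.e.\ for a \emph{single} $t>0$, and $|G_i|$ need not be bounded away from zero there; second, and more importantly, ``take $u$ very negative'' does not by itself control the sign of $g_{iy}[\cdot]y$, because $y$ is determined by $u$ through the linearized state equation and you have supplied no sign information on $y$. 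The phrase ``so that all $G_i$ become strictly negative'' also conflates the linearized level with the feasibility level: $G_i(\bar z,\bar\lambda)$ is fixed.

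The paper supplies two ingredients you are missing. First, a specific choice of $u$: split each $f_i=f_{i1}-f_{i2}$ with $f_{i1},f_{i2}\le 0$ and take $u=\sum_{i=1}^m f_{i1}\le 0$. Second --- and this is the real point --- a \emph{weak maximum principle}: testing the linearized equation $Ay_u+h_y[\cdot]y_u=0$, $\partial_\nu y_u=u\le 0$, against $y_u^+=\max(y_u,0)$ and using coercivity forces $y_u^+=0$, hence $y_u\le 0$ on $\bar\Omega$. Now $g_{iy}[\cdot]\ge 0$ from $(H4)$ gives $g_{iy}[\cdot]y_u\le 0$, so for every $i$
\[
g_{iy}[\cdot]y_u+u-f_{i1}=g_{iy}[\cdot]y_u+\sum_{j\ne i}f_{j1}\le 0,
\]
and adding the nonpositive terms $f_{i2}$ and $G_i(\bar z,\bar\lambda)$ shows $d_i:=G_i(\bar z,\bar\lambda)+g_{iy}[\cdot]y_u+u-f_i\in K_i$, whence $v_i:=d_i-G_i(\bar z,\bar\lambda)\in K_i-G_i(\bar z,\bar\lambda)$ does the job. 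The construction handles all $m$ components simultaneously because the single $u$ absorbs every $f_{j1}$ and the maximum principle forces the $y$-contribution to have the correct sign; your proposal does not provide either step.
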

\begin{remark} The condition $(iv)$ in Lemma \ref{key1} means that the unperturbed problem $P(\bar \lambda)$ satisfies the Robinson constraint qualification. According to \cite[Theorem 2.5]{Kien0}, this condition is equivalent to that there exists a number $\rho_0>0$ such that 
$$	B_E(0, \rho_0)\subset \nabla_z G( \bar z, \bar \lambda)(T(D(\bar \lambda), \bar z)\cap B_Z)-(K-G(\bar z, \bar \lambda))\cap B_E.
$$ \end{remark}	
\begin{proof}
Obviously, the assumptions $(H1)-(H3)$ imply the conditions $(i)$ and $(ii)$. Moreover, for  all $\lambda \in B_Z (\bar \lambda,
r_1'')$ and $(\hat y, \hat u)\in \mathcal{F}(\lambda)$, an easy computation shows that
\begin{align*}
&\langle \nabla_z J(\hat y, \hat u, w), (y,u)\rangle=\int_\Omega L_y(x, \hat y(x))y(x)dx \\
&\qquad  \qquad \qquad \qquad +\int_\Gamma \big(\ell_y(x, \hat y(x), \lambda(x))y(x)+ \alpha(\lambda(x))u(x)+\beta(\lambda(x))\hat u(x) u(x) \big)ds\\
&\langle \nabla_z J(\bar y,\bar  u,\bar  w), (y,u)\rangle=\int_\Omega L_y(x, \bar y(x))y(x)dx \\
&\qquad  \qquad \qquad \qquad +\int_\Gamma\big(\ell_y(x,\bar  y(x),\bar  \lambda(x))y(x)+ \alpha(\bar \lambda(x))u(x)+ \beta(\bar \lambda(x))\bar u(x) u(x) \big)dx,\\
&\nabla_y H(\bar z, \bar \lambda)y=\sum_{i,j=1}^{2}a_{ij}D_i y D_j +a_0 y +h_y[\cdot]y, \\ 
&\nabla_u H(\bar z, \bar \lambda)u=-\tau^*u,\\
&\nabla_z G(\bar z, \bar \lambda)=\big (\nabla_z G_1(\bar z, \bar \lambda), \nabla_z G_2(\bar z, \bar \lambda),...,\nabla_z G_m(\bar z, \bar \lambda)\big ),\\
& \nabla_z G_i(\bar z, \bar \lambda)=(g_{iy}[\cdot],  I_U) \ \forall i=1,2,...,m,  
\end{align*} where $I_U$ is the identity mapping of $U$. 

	In order to show $(iii)$, we take any $ f_0\in E_0$ and consider the equation
$$
H_y(\bar y, \bar u, \bar \lambda)y=f_0
$$ which is equivalent to 
\begin{align}\begin{cases}
	A y + h_y[\cdot]y =f_0 &\text{ in } \Omega\\
	\partial_\nu y=0 &\text{ on } \Gamma. \end{cases} \label{eq3.1}
\end{align} By $(H4)$, we have $h_y[x]\geq 0$ for a.e. $x\in\Omega$.  Therefore, the equation \eqref{eq3.1} has a unique solution $y\in W^{1,r}(\Omega)$ and so $(iii)$ is valid.

It remains to prove $(iv)$. It follows from $(iii)$ that  $\nabla H(\bar z, \bar \lambda)$ is surjective.  By \cite[Lemma 2.2]{Kien0},  we have
\begin{align*}
	T(D(\bar \lambda), \bar z)&=\big\{(y,u)\in Z \big| H_y(\bar z, \bar \lambda)y+H_u(\bar z, \bar \lambda)u=0\big\}\\
	 &=\left\{(y,u)\in Z \Big|\begin{cases}
		                           	A y + h_y[\cdot]y =0 &\text{ in } \Omega\\
		                           	\partial_\nu y=u &\text{ on } \Gamma \end{cases} \right\}.
\end{align*}
Therefore $(iv)$ is equivalent to saying that, for any
$(f_1, f_2,...,f_m)\in (L^2(\Gamma))^m$, there exist $(y,
u)\in Y\times U$ and $(v_1, v_2,...,v_m)\in {\rm cone}(K-G(\bar z, \bar \lambda))$
such that
\begin{align}\label{H3Assump}
&\begin{cases}
	A y + h_y[\cdot]y =0 &\text{ in } \Omega\\
	\partial_\nu y=u &\text{ on } \Gamma \end{cases} \notag \\
&f_i=g_{iy}[\cdot]y+  u-v_i,\quad i=1,2,...,m. 
\end{align}
Write $f_i= f_{i1}-f_{i2}$, where
$f_{i1}, f_{i2}\in K_i$, $i=1,2,...,m$. Set 
 $u_i= f_{i1}$ with $i=1,2,...,m$ and $u=\sum_{i=1}^{m}u_i$.
Since $f_{i1}(x)\leq 0$ for all $i=1,2,...,m$, one has $u(x)\leq 0$ a.e. $x\in \Gamma$.  

Now, we consider the following equation 
\begin{align} \label{themeq}
	\begin{cases}
		A y + h_y[\cdot]y =0 &\text{ in } \Omega\\
		\partial_\nu y=u &\text{ on } \Gamma .\end{cases}
\end{align}
By the assumption (H4), the equation \eqref{themeq} has a unique
solution $y_u\in Y$, that is 
\begin{align}\label{themeq2}
\int_{\Omega}\sum_{i,j=1}^2 a_{ij}(x)D_iy_u(x)D_jv(x)dx+\int_{\Omega}a_0(x)y_u(x)v(x)dx+\int_{\Omega}&h_y[x]y_u(x)v(x)dx\notag\\
&=\int_{\Gamma}u(x) \tau v(x)ds	
\end{align}  
for all $v \in W^{1,s}(\Omega)$. Setting $y_u^+=\max\{0,y_u\}$,  we have $y_u^+ \geq 0$ for a.e. $x\in \Gamma$ and $y_u^+ \in W^{1,2}(\Omega) \hookrightarrow  W^{1,s}(\Omega)$. Taking $\varphi =y^+_u$ in the equation \eqref{themeq2}, we obtain 
\begin{align*}
	\int_{\Omega}\sum_{i,j=1}^2 a_{ij}(x)D_iy_u(x)D_jy^+_u(x)dx+\int_{\Omega}a_0(x)y_u(x)y^+_u(x)dx+\int_{\Omega}&h_y[x]y_u(x)y^+_u(x)dx\notag\\
	&=\int_{\Gamma}u(x) \tau y^+_u(x)ds,	
\end{align*} 
which is equivalent to 
\begin{align*}
	\int_{\Omega}\sum_{i,j=1}^2 a_{ij}(x)D_iy^+_u(x)D_jy^+_u(x)dx+\int_{\Omega}a_0(x)y^+_u(x)y^+_u(x)dx+\int_{\Omega}&h_y[x]y^+_u(x)y^+_u(x)dx\notag\\
	&=\int_{\Gamma}u(x) \tau y^+_u(x)ds.	
\end{align*}
 Combining this with \eqref{C0} and the assumption $(H4)$ yields 
$$C'_0 \|y_u^+\|^2_{H^1(\Omega)}\leq \int_{\Gamma}u(x) \tau y_u^+(x)ds \leq 0$$ for a constant $C'_0>0$.   
This implies that $\|y_u^+\|_{H^1(\Omega)}=0$ and so  $\|y_u^+\|_{L^2(\Omega)}=0$. Hence $y_u^+=0$ for a.e. $x\in \Omega$ which leads to $y_u\leq 0$ for a.e. $x\in \Omega $. Since  $y_u\in C(\bar \Omega)$, we get $y_u\leq 0$ for all $x\in \bar \Omega$. Combining this with the assumption $(H4)$ yields  $g_y[\cdot]y_u\leq 0$. It follows that
\begin{align*}
g_{iy}[\cdot] y_u + u-f_{i1}=g_{iy}[\cdot] y_u+\sum_{j\neq i}u_j\in K_i.
\end{align*}
Define
$$d_i= g_i[\cdot]+ \bar u+  g_{iy}[\cdot] y_u +
u-f_i, \quad i=1,2,...,m.$$
Then $d_i\in K_i$, and 
$$f_i=g_{iy}[\cdot] y_u +u-(d_i- g_i[\cdot]- \bar u),$$
for all $i=1,2,...,m$, and $(y_u, u)\in T(D(\bar \lambda),\bar z)$. Hence,  \eqref{H3Assump}  is valid with $v_i=d_i- g_i[\cdot]-\bar u$ for $i=1,2,...,m$. Consequently, $(iv)$ is satisfied.
The lemma is proved. 
\end{proof}

According to \cite{Aubin} and \cite[Lemma 12]{Son}, we have an exact formula for computing the cones $T(K, G(\bar z, \bar \lambda))$ and $N(K, G(\bar z, \bar \lambda))$ which will be used in the proof of next lemma: 
\begin{align*}
	&T(K, G(\bar z, \bar \lambda))= \prod_{i=1}^{m}T(K_i, G_i(\bar z, \bar \lambda)), \\
	& N(K, G(\bar z, \bar \lambda))= \prod_{i=1}^{m}N(K_i, G_i(\bar z, \bar \lambda)). 
\end{align*} 
where
\begin{align*}
	T(K_i, G_i(\bar z, \bar \lambda))&=\left\{ w \in L^2(\Gamma) \mid w(x) \in T((-\infty,0], G_i(\bar z,\bar \lambda)(x)) \ \text{a.e.} \ x \in \Gamma \right\}\\
	&=\left\{ w \in L^2(\Gamma) \mid w(x) \leq 0 \quad   \text{whenever} \quad g_i[x]+\bar u(x)=0 \right \},
\end{align*} 
and 
\begin{align*}
	N(K_i, G_i(\bar z,\bar \lambda)) &= \left\{ w^* \in L^2(\Gamma) \mid w^*(x) \in N\big((-\infty,0], G_i(\bar z,\bar \lambda)(x)\big) \quad \text{a.e.} \quad x\in \Gamma  \right\} \\
	&= \left\{ w^* \in L^2(\Gamma) \Big |\  w^*(x) \begin{cases}
		\geq 0 &\quad \text{ whenever }\quad   g_i[x]+\bar u(x)=0 \\
		=0,  &\quad \text{otherwise}
	\end{cases}  \right\}.
\end{align*}
The solution stability in $L^2$-norm is established in the following lemma. The proof of this lemma is followed by the same technique as in \cite[Lemma 4.1]{Kien3}. For the convenience of the reader, we provide a proof here.
\begin{lemma} \label{key2}
Under the assumptions of Theorem 2.1, $(\bar y, \bar u)$ is a locally strongly optimal  solution of $(P(\bar \lambda))$  and there exist positive numbers $R_0, s_0$ and $l_0$ such that for all $\lambda \in B_{L^{\infty}(\Gamma)}(\bar \lambda, s_0)$,   every couple $(\hat y_\lambda, \hat u_\lambda)\in \mathcal S_{R_0}(\lambda)$ is a locally optimal solution of $(P(\lambda))$ and
\begin{align}\label{HolderCond3}
	\|\hat y_\lambda-\bar y\|_{L^\infty(\Omega)} + \|\hat u_\lambda-\bar u\|_{{L^2(\Gamma)}} \leq  l_0\|\lambda-\bar \lambda\|_{L^\infty(\Gamma)}^{1/2}.
\end{align}
\end{lemma}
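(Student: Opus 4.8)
The plan is to reduce Lemma \ref{key2} to the abstract stability result \cite[Proposition 3.1]{Kien3} by verifying that the reformulation of $(P(\lambda))$ as a parametric mathematical programming problem $\min\{J(z,\lambda): H(z,\lambda)=0,\ G(z,\lambda)\in K\}$ fits the hypotheses of that proposition. Lemma \ref{key1} already supplies most of the structural input: the $C^2$-smoothness and local Lipschitz estimates on $J,H,G$ near $(\bar z,\bar\lambda)$ (parts (i)--(ii)), the bijectivity of $H_y(\bar z,\bar\lambda)$ (part (iii)), and the Robinson constraint qualification for the unperturbed problem (part (iv), equivalently the inclusion $B_E(0,\rho_0)\subset\nabla_zG(\bar z,\bar\lambda)(T(D(\bar\lambda),\bar z)\cap B_Z)-(K-G(\bar z,\bar\lambda))\cap B_E$ recorded in the Remark). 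The remaining hypothesis of \cite[Proposition 3.1]{Kien3} is the strictly nonnegative (strong) second-order sufficient condition at $(\bar z,\bar\lambda)$, and this is exactly the coercivity assumption \eqref{StrictSOC} of Theorem \ref{Theoremkey}, once one checks that the quadratic form appearing there is the second-order derivative of the Lagrangian $\mathcal{L}(z,\lambda,\vartheta,e)=J(z,\lambda)+\langle H(z,\lambda),\vartheta\rangle+\sum_i\langle G_i(z,\lambda),e_i\rangle$ in the direction $(y,u)$, and that $\mathcal{C}[(\bar y,\bar u),\bar\lambda]$ coincides with the critical cone of the abstract problem.

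First I would write down $\nabla^2_{zz}\mathcal{L}(\bar z,\bar\lambda,\vartheta,e)[(y,u),(y,u)]$ using the formulas for $\nabla_zJ$, $\nabla_zH$, $\nabla_zG_i$ computed in the proof of Lemma \ref{key1}: the second derivative of $J$ contributes $\int_\Omega L_{yy}[x]y^2 + \int_\Gamma(\ell_{yy}[x]y^2 + \beta(\bar\lambda(x))u^2)$, the term $\langle H,\vartheta\rangle$ contributes $\int_\Omega \vartheta\, h_{yy}[x]y^2$ (plus boundary terms from $\tau^*$ that vanish because $\tau^*(u+\lambda)$ is affine in $(u,\lambda)$), and each $\langle G_i,e_i\rangle$ contributes $\int_\Gamma e_i(x)g_{iyy}[x]y^2$ since $G_i$ is affine in $u$ and only $g_i(\cdot,y,\lambda)$ is nonlinear. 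Summing gives precisely the left-hand side of \eqref{StrictSOC}. Next I would identify the critical cone: the adjoint equation \eqref{adj} and the stationarity condition \eqref{sta} say $(\vartheta,e_1,\dots,e_m)\in\Lambda[(\bar y,\bar u),\bar\lambda]$ is a Lagrange multiplier of the abstract problem, and the explicit description of $T(K_i,G_i(\bar z,\bar\lambda))$ and $N(K_i,G_i(\bar z,\bar\lambda))$ recalled before the lemma shows that Definition \ref{DefCrD}(i)--(iii) is exactly the membership of $(y,u)$ in the critical cone of the abstract problem. With these identifications, \eqref{StrictSOC} is the strong second-order sufficient condition of \cite[Proposition 3.1]{Kien3}.

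Then I would invoke \cite[Proposition 3.1]{Kien3} directly: it yields (a) that $(\bar y,\bar u)$ is a locally strongly optimal solution of $(P(\bar\lambda))$, i.e.\ satisfies a quadratic growth condition, and (b) the existence of $R_0,s_0>0$ and $l_0>0$ such that for $\lambda\in B_\Pi(\bar\lambda,s_0)$ every $(\hat y_\lambda,\hat u_\lambda)\in\mathcal S_{R_0}(\lambda)$ is a local minimizer of $(P(\lambda))$ and obeys the H\"older estimate $\|\hat y_\lambda-\bar y\|_Y+\|\hat u_\lambda-\bar u\|_U\le l_0\|\lambda-\bar\lambda\|_\Pi^{1/2}$, which in the present notation reads $\|\hat y_\lambda-\bar y\|_{W^{1,r}(\Omega)}+\|\hat u_\lambda-\bar u\|_{L^2(\Gamma)}\le l_0\|\lambda-\bar\lambda\|_{L^\infty(\Gamma)}^{1/2}$. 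Since $r>2$ gives the continuous embedding $W^{1,r}(\Omega)\hookrightarrow C(\bar\Omega)\subset L^\infty(\Omega)$, we may replace the $W^{1,r}(\Omega)$-norm of $\hat y_\lambda-\bar y$ by its $L^\infty(\Omega)$-norm at the cost of enlarging $l_0$, which gives \eqref{HolderCond3}. (One may either state the conclusion with the $W^{1,r}$-norm — which is stronger — or with the $L^\infty$-norm as in the lemma statement; the embedding makes the two versions equivalent up to the constant.)

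The one genuinely delicate point — and the step I expect to be the main obstacle — is confirming that the abstract proposition is applicable with the chosen functional-analytic setting, in particular that the parameter dependence is regular enough: \cite[Proposition 3.1]{Kien3} requires joint continuous Fr\'echet differentiability of $H$ in $(z,\lambda)$ and the Lipschitz-in-$\lambda$ bounds of part (ii) of Lemma \ref{key1}, and one must be careful that these hold with $\Pi=L^\infty(\Gamma)$ (not $L^2$), since the H\"older exponent $1/2$ and the norm on the right-hand side come from exactly this choice. A secondary technical care is needed in matching the quadratic forms: one must verify that no extra boundary integral terms arise in $\nabla^2_{zz}\langle H(\cdot,\cdot,\bar\lambda),\vartheta\rangle$ beyond $\int_\Omega\vartheta h_{yy}[x]y^2\,dx$ — this is immediate because the only nonlinearity in $H$ is $h(\cdot,y)$, which lives in the domain integral, while the boundary term $\tau^*(u+\lambda)$ is affine. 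Once these checks are in place, the proof is a direct translation of \cite[Lemma 4.1]{Kien3} to the boundary-control setting, as the paragraph preceding the lemma already indicates.
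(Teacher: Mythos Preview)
Your reduction to \cite[Proposition~3.1]{Kien3} is on the right track, but you have misidentified what that proposition assumes. From the paper's own proof (``From Lemma~\ref{key1} and the fact $\bar z$ is a locally strongly optimal solution of $(P(\bar\lambda))$, it follows that the assumptions of Proposition~3.1 in \cite{Kien3} are fulfilled''), the abstract proposition takes \emph{quadratic growth} at $\bar z$ as a hypothesis, not the second-order condition~\eqref{StrictSOC}. So the proposition does not deliver local strong optimality as an output; you must establish it first, and this is precisely the substantive part of the proof that you have skipped.

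The paper devotes most of the argument to deducing quadratic growth from~\eqref{StrictSOC} via contradiction: assuming it fails, one extracts $z_k\in\mathcal F(\bar\lambda)$ with $J(z_k,\bar\lambda)<J(\bar z,\bar\lambda)+o(t_k^2)$, normalizes $\hat z_k=(z_k-\bar z)/t_k$, and shows (Claim~1) that the weak limit $\hat z$ lies in $\mathcal C[\bar z,\bar\lambda]$, (Claim~2) that the multiplier conditions match the abstract Lagrangian, and (Claim~3) that a second-order Taylor expansion of $\mathcal L$ combined with~\eqref{StrictSOC} forces $\hat z=0$. The crucial final step (Claim~4) uses the implicit function theorem (enabled by the bijectivity of $H_y(\bar z,\bar\lambda)$) to get $\|\hat u_k\|_U\ge\sigma_0>0$, and then the uniform lower bound $\beta(\bar\lambda)\ge\gamma$ from (H3) together with the compact embedding $W^{1,r}(\Omega)\hookrightarrow C(\bar\Omega)$ (so $\hat y_k\to 0$ strongly) to pass to the limit in $\nabla^2_{zz}\mathcal L(\bar z,\vartheta,e)(\hat z_k,\hat z_k)\le o(t_k^2)/t_k^2$ and obtain the contradiction $0\ge\gamma\sigma_0^2$. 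Your identification of $\nabla^2_{zz}\mathcal L$ with the quadratic form in~\eqref{StrictSOC} and of $\mathcal C[\bar z,\bar\lambda]$ with the abstract critical cone is correct and needed for Claim~3, but by itself it does not yield quadratic growth: the passage from ``\eqref{StrictSOC} is positive on the critical cone'' to ``$J$ grows quadratically on $\mathcal F(\bar\lambda)$'' is exactly the Claims~1--4 machinery, which requires the compactness and the strict positivity of $\beta$ in an essential way.
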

\begin{proof} 
First, we show that $\bar z=(\bar y, \bar u)$ is a locally strongly optimal solution of $(P(\bar \lambda))$. By contradiction, suppose that this assertion is false. Then there is a sequence $\{z_k=(y_k, u_k)\}\subset \mathcal{F}(\bar \lambda)$, $z_k \to
\bar z$ such that
\begin{align}\label{I0}
 J(z_k, \bar \lambda)< J(\bar z, \bar \lambda)+o(t_k^2),
\end{align} where $t_k:=\|z_k- \bar z\|_Z \to 0$ as $k \to \infty$. Putting
$\hat z_k=(\hat y_k, \hat u_k), \hat y_k=
\frac{y_k-\bar y}{t_k}, \hat u_k= \frac{u_k-\bar u}{t_k}$ , we have  $\|\hat z_k \|_Z=\|\hat y_k\|_Y+\|\hat u_k\|_U=1$. Since $Z$ is reflexive, we may assume that $\hat z_k \rightharpoonup \hat z=(\hat y, \hat u)$. Moreover, since the embedding $Y=W^{1,r}(\Omega) \hookrightarrow C(\bar \Omega)$ is compact,
$\hat y_k \to \hat y$ in $C(\bar \Omega).$\\
\textit{Claim 1.} $\hat z\in \mathcal{C}[\bar z, \bar \lambda]$.

Writing $z_k=\bar z+t_k\hat z_k$ , it follows from a first-order Taylor expansion and \eqref{I0} that 
$$
\nabla_z J(\bar z, \bar \lambda)\hat z_k+\frac{o(t_k)}{t_k}\leq \frac{o(t_k^2)}{t_k}.
$$ Passing to the limit as $k\to \infty$, we obtain $\nabla_z J(\bar z, \bar \lambda)\hat z\leq 0$. 

Since $z_k \in \mathcal F(\bar \lambda)$, $H(z_k,\bar \lambda)=H(\bar z,\bar \lambda)=0$ for all $k\geq 1$. By a similar argument, we get $\nabla_z H(\bar z, \bar \lambda)\hat z=0$, which is equivalent to
$$
\begin{cases}
	A \hat y + h_y[\cdot]\hat y =0 &\text{ in } \Omega\\
	\partial_\nu \hat y=\hat u &\text{ on } \Gamma. \end{cases}
$$ 
Also,  since
$G(z_k, \bar \lambda)-G(\bar z, \bar \lambda)\in K-G(\bar z, \bar \lambda)$, we have
$$
\nabla_z G(\bar z, \bar \lambda) \hat z_k\in \frac{1}{t_k}
(K-G(\bar z, \bar \lambda))\subset T(K, G(\bar z, \bar \lambda)).
$$ Since $T(K, G(\bar z, \bar \lambda))$ is weakly closed, we obtain $\nabla_z G(\bar z, \bar \lambda)\hat z\in
T(K, G(\bar z, \bar \lambda))$.  This is equivalent to
\begin{align*}
&g_{iy}[x]\hat y(x) + \hat u(x)\geq 0\ \text{whenever}\  g_i[x]+\bar u(x)=0,\ i=1,2,...,m.
\end{align*} Hence $\hat y$ and $\hat u$ satisfy the conditions $(i)-(iii)$ of Definition \ref{DefCrD}, and so $(\hat y, \hat u)\in\mathcal{C}[\bar z, \bar \lambda]$.\\
\textit{Claim 2.}  $(\vartheta, e_1, e_2,...,e_m)\in \Lambda[(\bar y, \bar u),  \bar \lambda]$  is equivalent to the following conditions:
\begin{align*}
\nabla_z \mathcal{L}(\bar z , \vartheta, e)=0, \quad e \in
N(K, G(\bar z, \bar \lambda)),\  e=(e_1, e_2,...,e_m),
\end{align*} where  Lagrangian $\mathcal{L}(z, \vartheta, e)$ is given by
\begin{align*}
\mathcal{L}(z, \vartheta, e)=J(z, \bar \lambda) +\langle \vartheta, H(z, \bar \lambda)\rangle + \langle e, G( z, \bar \lambda)\rangle.
\end{align*} In fact,  the condition $\nabla_z \mathcal{L}(\bar z , \vartheta, e)=0$ is rewritten as follows
\begin{align*}
&\begin{cases}
	-A \vartheta +h_y(\bar  y)\vartheta +L_y(\cdot,\bar  y)=0 &\text{ in } \Omega \\
	\partial_\nu \vartheta +\ell_y[\cdot]+\sum_{i=1}^{m}g_{iy}[\cdot]e_i=0  &\text{ on } \Gamma, 
\end{cases}  \\
&\alpha(\bar \lambda) +\beta(\bar \lambda)\bar u -\vartheta + \sum_{i=1}^{m}e_{\lambda i}=0,
\end{align*} which is equivalent to the conditions \eqref{adj} and \eqref{sta}; while the condition $e \in
N(K, G(\bar z, \bar \lambda))$ means
$
e_i(x)\in N((-\infty,0], g_i[x] +u(x))$ for a.e. $ x \in \Gamma,$
or 
$$ e_i(x) \begin{cases}
	\geq 0 &\quad \text{ whenever }\quad   g_i[x]+\bar u(x)=0 \\
	=0,  &\quad \text{otherwise}
\end{cases}$$ for all $i=1,2,...,m$. 
By a simple argument, the above condition is equivalent to
\begin{align*}
e_i(x)\geq 0,\ e_i(x)(g_i[x] +u(x))=0 \text{ a.e.}\ x\in\Gamma,\ i=1,2,...,m 
\end{align*} which is the condition \eqref{com}. Hence, the claim is justified. \\
\textit{Claim 3.} $\hat z=0$.

 By a second-order Taylor expansion for $\mathcal{L}$, one has
\begin{align*}
\mathcal{L}(z_k, \vartheta, e)-\mathcal{L}(\bar z, \vartheta,
e)&= t_k \nabla_z\mathcal{L}(\bar z, \vartheta, e)\hat z_k
+\frac{t_k^2}{2}\nabla^2_{zz}\mathcal{L}(\bar  z, \vartheta,
e)(\hat z_k,\hat z_k) +o(t_k^2)\\
&=0+\frac{t_k^2}{2}\nabla^2_{zz}\mathcal{L}(\bar z, \vartheta,
e)(\hat z_k, \hat z_k) + o(t_k^2).
\end{align*}
On the other hand, we get
\begin{align*}
&\mathcal{L}(z_k, \vartheta, e)-\mathcal{L}(\bar z, \vartheta,
e)\\
&=J(z_k, \bar \lambda)-J(\bar z, \bar \lambda)
+\langle \vartheta, H(z_k, \bar \lambda)-H(\bar z, \bar \lambda)\rangle+\langle e, G(z_k, \bar \lambda)-G(\bar z, \bar \lambda)\rangle\\
&=J(z_k, \bar \lambda)-J(\bar z, \bar \lambda) +\langle e,  G(z_k, \bar \lambda)-G(\bar z, \bar \lambda)\rangle\\
 & \leq J(z_k, \bar \lambda)-J(\bar z, \bar \lambda)\leq
o(t_k^2).
\end{align*} Here we used the fact that $e\in N(K, G(\bar z, \bar \lambda))$, $H(z_k,\bar \lambda)=H(\bar z,\bar \lambda)=0$ and \eqref{I0}. Therefore, we have
\begin{align}\label{iqL1}
\nabla^2_{zz}\mathcal{L}(\bar z, \vartheta, e)(\hat z_k, \hat
z_k) \leq \frac{o(t_k^2)}{t_k^2}.
\end{align} By letting $k\to\infty$,  we have
$$
\nabla^2_{zz}\mathcal{L}(\bar z, \vartheta, e)(\hat z, \hat
z) \leq 0.
$$ This is equivalent to
\begin{align*}
\int_\Omega &\big(L_{yy}[x]\hat y^2(x)+\vartheta h_{yy}[x]\hat y^2(x) \big)dx \notag \\&+ \int_\Gamma \big( \ell_{yy}[x]\hat y^2(x)+  \beta (\bar{\lambda}(x))\hat u^2(x)  + 
\sum_{i=1}^{m}e_i(x) g_{iyy}[x]\hat y^2(x) \big) ds \leq 0.
\end{align*} Combining this with \eqref{StrictSOC}, we obtain $\hat
z=0$.\\
\textit{Claim 4.} There exists $\sigma_0 >0$ such that $\|\hat u_k\|_U > \sigma_0$ for $k$ large enough. 

Since $ H(\bar z, \bar \lambda)=0$ and $\nabla_y H(\bar y, \bar u, \bar \lambda)$ is
bijective, the Implicit Function Theorem implies that there exist
balls $B_Y(\bar y, \epsilon_1)$, $B_U(\bar u, \epsilon_2)$
and a mapping $\phi: B_U(\bar u, \epsilon_2)\to B_Y(\bar y,
\epsilon_1)$ of class $C^1$ such that $H(\phi(u), u, \bar \lambda)=0$ for all
$u\in B_U(\bar u, \epsilon_2)$. In particular, $\phi$ is
Lipschitz continuous on $B_U(\bar u, \epsilon_2)$ with constant
$k_\phi>0$. Since $z_k\to \bar z$, there exists a positive integer
$k_0$ such that $y_k=\phi(u_k)$ for
$k>k_0$. By definition of $(\hat y_k, \hat u_k)$, we have
\begin{align*}
\|\hat y_k\|_Y=\frac{\|\phi(u_k)-\phi(\bar u)\|_Y}{t_k}\leq
\frac{k_\phi\|u_k-\bar u\|_U}{t_k}=k_\phi\|\hat u_k\|_U.
\end{align*}
It follows that
$$
1=\|\hat y_k\|_Y + \|\hat u_k\|_U \leq (1+ k_\phi)\|\hat u_k\|_U
$$  and so $\|\hat u_k\|_U\geq\sigma_0$ for all $k>k_0$, where $\sigma_0:=\frac{1}{1+ k_\phi}$. 

Combining this with  \eqref{iqL1} and $(H4)$ yields
 \begin{align}
\frac{o(t_k^2)}{t_k^2}\geq& \int_\Omega \big(L_{yy}[x]\hat y_k^2(x)+\vartheta h_{yy}[x]\hat y_k^2(x) \big)dx \notag \\&\quad+ \int_\Gamma \big( \ell_{yy}[x]\hat y_k^2(x)+  \beta (\bar{\lambda}(x))\hat u_k^2(x)  + 
\sum_{i=1}^{m}e_i(x) g_{iyy}[x]\hat y_k^2(x) \big) ds \notag\\
\geq& \int_\Omega \big(L_{yy}[x]\hat y_k^2(x)+\vartheta h_{yy}[x]\hat y_k^2(x) \big)dx \notag \\&\quad+ \int_\Gamma \big( \ell_{yy}[x]\hat y_k^2(x)+  \gamma\sigma_0^2  + 
\sum_{i=1}^{m}e_i(x) g_{iyy}[x]\hat y_k^2(x) \big) ds \label{iqL2}
\end{align} 
Notice that  $\hat y_k\to 0$ in
$C(\bar\Omega)$. By letting $k\to\infty$, it follows from \eqref{iqL2} 
that $0\geq \gamma\sigma_0^2$, which is impossible.  
Therefore, $\bar z$ is a locally strongly optimal solution of $(P(\bar \lambda))$.

Now we will use Proposition 3.1 in \cite{Kien3}. From Lemma \ref{key1} and the fact $\bar z$ is a locally strongly optimal solution of $(P(\bar \lambda))$, it follows that the assumptions of Proposition 3.1 in \cite{Kien3} are fulfilled. Therefore,  there exist positive numbers $R_0,  s_0$ and $l_0$  such that for all $\lambda \in B(\bar \lambda, s_0)$, every couple  $(\hat  y_\lambda, \hat  u_\lambda)\in	\mathcal S_{R_0}(\lambda)$  is a locally optimal solution of $(P(\lambda))$, and  
\begin{align*}
	\mathcal S_{R_0}(\lambda)\subset (\bar y, \bar u) +  l_0\|\lambda- \lambda_0\|_\lambda^{1/2}\bar B_Z,
\end{align*} 
which is equivalent to \eqref{HolderCond3}.  
The proof of the lemma is complete.
\end{proof}

 Notice that, according  Lemma 3.1 in \cite{Kien3}, there exist  numbers $\beta_0>0$ and   $M_0>0$ such that
\begin{align}\label{Bound1}
\|(e_{\lambda 1},..., e_{\lambda m})\|_{(L^2(\Gamma))^m} +\|\vartheta_\lambda\|_{W^{1,s}(\Omega)}\leq M_0
\end{align} for all $\lambda \in B(\bar \lambda, \beta_0),\ (e_{\lambda 1},..., e_{\lambda m}, \vartheta_\lambda)\in \Lambda[(\hat  y_\lambda, \hat u_\lambda), \lambda]$. Besides, by assumption $(H3)$, there is a number $\beta_1>0$ such that for a.e. $x\in\Gamma$, one has
\begin{align}\label{StrictNonegative}
\beta(\lambda(x))>\frac{\gamma}2,\  \forall \lambda \in B(\bar \lambda, \beta_1).
\end{align} Without loss of generality we can assume that \eqref{HolderCond1},  \eqref{Bound1} and \eqref{StrictNonegative} are valid for all $\lambda \in B(\bar \lambda, s_0)$

\begin{lemma}\label{LemmaReg} For each $\lambda \in B(\bar \lambda, s_0)$,  if $(\hat  y_\lambda, \hat  u_\lambda)\in	\mathcal S_{R_0}(\lambda)$ then $\hat u_\lambda \in L^\infty(\Gamma)$.
\end{lemma}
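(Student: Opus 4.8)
The plan is to exploit the stationarity condition \eqref{sta} together with the bounds already in hand. Fix $\lambda \in B(\bar \lambda, s_0)$ and $(\hat y_\lambda, \hat u_\lambda) \in \mathcal S_{R_0}(\lambda)$. Since $(\hat y_\lambda, \hat u_\lambda)$ is a locally optimal solution of $(P(\lambda))$ (by Lemma \ref{key2}) and the Robinson constraint qualification holds at this point for $\lambda$ near $\bar\lambda$ (this follows from condition $(iv)$ of Lemma \ref{key1} together with the openness/stability of Robinson's CQ under small perturbations, using the continuity established in Lemma \ref{key1}(i)), there exists a set of Lagrange multipliers $(\vartheta_\lambda, e_{\lambda 1}, \dots, e_{\lambda m}) \in \Lambda[(\hat y_\lambda, \hat u_\lambda), \lambda]$. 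From \eqref{sta} we solve explicitly
\begin{align*}
\hat u_\lambda(x) = \frac{1}{\beta(\lambda(x))}\Big(\vartheta_\lambda(x) - \alpha(\lambda(x)) - \sum_{i=1}^m e_{\lambda i}(x)\Big) \quad \text{a.e. } x \in \Gamma,
\end{align*}
where $\vartheta_\lambda(x)$ here denotes the trace $\tau\vartheta_\lambda$ on $\Gamma$, and the denominator is bounded below by $\gamma/2 > 0$ thanks to \eqref{StrictNonegative}. So the task reduces to showing each term on the right belongs to $L^\infty(\Gamma)$.

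First I would handle the easy pieces. The term $\alpha(\lambda(\cdot))$ is in $L^\infty(\Gamma)$ since $\alpha$ is continuous and $\lambda \in L^\infty(\Gamma)$. For the adjoint state $\vartheta_\lambda$: it solves the adjoint equation \eqref{adj}, whose right-hand side data are $-L_y(\cdot, \hat y_\lambda) \in L^\infty(\Omega)$ (using $(H1)$ and $\hat y_\lambda \in C(\bar\Omega)$, which holds because $W^{1,r}(\Omega) \hookrightarrow C(\bar\Omega)$) and the Neumann datum $-\ell_y(\cdot, \hat y_\lambda, \lambda) - \sum_i g_{iy}(\cdot, \hat y_\lambda, \lambda) e_{\lambda i}$. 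The function $-\ell_y(\cdot,\hat y_\lambda,\lambda)$ is in $L^\infty(\Gamma)$ by $(H2)$, but the multiplier terms $g_{iy}(\cdot,\hat y_\lambda,\lambda)e_{\lambda i}$ are a priori only in $L^2(\Gamma)$ (since $e_{\lambda i} \in L^2(\Gamma)$, with the norm bound \eqref{Bound1}); here $g_{iy}(\cdot,\hat y_\lambda,\lambda)$ is bounded in $L^\infty$, so the product is in $L^2(\Gamma)$. By the regularity theory for \eqref{P2}-type equations (the same $W^{1,r}$-estimate \eqref{state} from \cite[Theorem 3.1]{Son2}, applied to the adjoint equation which has the same principal structure), $\vartheta_\lambda \in W^{1,r}(\Omega) \hookrightarrow C(\bar\Omega)$, hence its trace is in $L^\infty(\Gamma)$ — note this is a genuine improvement over the mere $W^{1,s}(\Omega)$ regularity recorded in Definition \ref{LagrangeMul}, and it is exactly what the $L^2$ datum $\big(\ell_y + \sum g_{iy}e_{\lambda i}\big) \in L^2(\Gamma)$ buys us in two dimensions.

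The main obstacle is the term $\sum_{i=1}^m e_{\lambda i}(x)$, which the above only controls in $L^2(\Gamma)$. To upgrade it, I would use the complementarity conditions \eqref{com}: on the set where $g_i(x,\hat y_\lambda(x),\lambda(x)) + \hat u_\lambda(x) < 0$ one has $e_{\lambda i}(x) = 0$, so $e_{\lambda i}$ is supported on the active set $\{x : \hat u_\lambda(x) = -g_i(x,\hat y_\lambda(x),\lambda(x))\}$. On that active set, $\hat u_\lambda(x) = -g_i(x,\hat y_\lambda(x),\lambda(x))$, and the right-hand side is bounded in $L^\infty$ (by $(H2)$ and $\hat y_\lambda \in C(\bar\Omega)$, $\lambda \in L^\infty$). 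The delicate point, and where I expect to need assumption-type input or a bootstrap argument, is that the active sets of different constraints may overlap, so one cannot immediately conclude a pointwise formula for $\hat u_\lambda$; instead I would argue as follows. Rearranging \eqref{sta} gives $\sum_i e_{\lambda i}(x) = \vartheta_\lambda(x) - \alpha(\lambda(x)) - \beta(\lambda(x))\hat u_\lambda(x)$. On the (possibly empty) set where all constraints are inactive, every $e_{\lambda i}=0$; where at least one is active, pick any active index $i_0$, so $\hat u_\lambda(x) = -g_{i_0}(x,\hat y_\lambda(x),\lambda(x))$ is bounded. Either way $\hat u_\lambda(x)$ is controlled pointwise a.e. by $\max_i |g_i(x,\hat y_\lambda(x),\lambda(x))|$ on the active region and, on the inactive region, by $(\vartheta_\lambda(x) - \alpha(\lambda(x)))/\beta(\lambda(x))$ — wait, that still involves $\sum e_{\lambda i}$, which vanishes there, so it is fine. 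Thus
\begin{align*}
|\hat u_\lambda(x)| \leq \max\Big\{ \max_{1\le i\le m}|g_i(x,\hat y_\lambda(x),\lambda(x))|,\ \tfrac{2}{\gamma}\big(|\vartheta_\lambda(x)| + |\alpha(\lambda(x))|\big)\Big\} \quad \text{a.e. } x\in\Gamma,
\end{align*}
and every quantity on the right is in $L^\infty(\Gamma)$ by the preceding paragraph and $(H2)$. Hence $\hat u_\lambda \in L^\infty(\Gamma)$, completing the proof. A small technical gap to fill carefully is the existence of Lagrange multipliers for the perturbed problem $(P(\lambda))$ at $(\hat y_\lambda, \hat u_\lambda)$: this should follow from the first-order necessary conditions plus the perturbed Robinson CQ, which I would justify by noting that $\nabla_z G$ and $\nabla_z H$ depend continuously on the base point (Lemma \ref{key1}(i)) and on $\lambda$, so the surjectivity-type condition $(iv)$ persists on a neighborhood of $(\bar z, \bar\lambda)$, shrinking $s_0$ if necessary.
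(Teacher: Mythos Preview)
Your argument is correct. The case split --- at a.e.\ $x\in\Gamma$ either some constraint is active (so $\hat u_\lambda(x)=-g_{i_0}(x,\hat y_\lambda(x),\lambda(x))$ is bounded by $(H2)$) or all are inactive (so $\sum_i e_{\lambda i}(x)=0$ and $\hat u_\lambda(x)=(\vartheta_\lambda(x)-\alpha(\lambda(x)))/\beta(\lambda(x))$ is bounded once $\vartheta_\lambda\in C(\bar\Omega)$) --- is valid and gives the pointwise estimate you write down. The regularity bootstrap $\vartheta_\lambda\in W^{1,r}(\Omega)\hookrightarrow C(\bar\Omega)$ from the adjoint equation with $L^2(\Gamma)$ Neumann data is exactly what the paper uses too.

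The paper takes a different route to the same conclusion: instead of the active/inactive dichotomy, it shows that the \emph{sum} $\xi=\sum_i e_{\lambda i}$ lies in the normal cone $N((-\infty,0],\,g(x,\hat y_\lambda,\lambda)+\hat u_\lambda)$ with $g=\max_i g_i$, and then rewrites this as the metric projection identity
\[
g(x,\hat y_\lambda,\lambda)+\hat u_\lambda = P_{(-\infty,0]}\Big(\tfrac{1}{\beta(\lambda)}(\vartheta_\lambda-\alpha(\lambda))+g(x,\hat y_\lambda,\lambda)\Big),
\]
from which the $L^\infty$ bound follows by nonexpansiveness of $P_{(-\infty,0]}$. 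Your argument is more elementary for this lemma in isolation. The payoff of the paper's approach is that the projection formula above (and its analogue at $\bar\lambda$) is precisely what drives the $L^\infty$ H\"older estimate in the next lemma: one subtracts the two projection identities and uses nonexpansiveness to compare $\hat u_\lambda$ with $\bar u$ pointwise. So the paper is setting up machinery it will reuse, whereas your proof, while cleaner here, would not directly feed into that next step.
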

\begin{proof} Fix $\lambda \in B(\bar \lambda, s_0)$. Since  $(\hat  y_\lambda, \hat  u_\lambda)\in	\mathcal S_{R_0}(\lambda)$  is a locally optimal solution of $(P(\lambda))$, there exist Lagrange multipliers $\vartheta_\lambda\in W^{1,s}(\Omega),\  e_{\lambda 1}, e_{\lambda 2}, ...,e_{\lambda m}\in L^2(\Gamma)$ satisfying conditions \eqref{adj}-\eqref{com}.
It follows from conditions \eqref{adj} and \eqref{sta} that for a.e. $x \in \Gamma$,
\begin{align*}
& \alpha(\lambda(x))+\beta(\lambda(x))\hat  u_\lambda(x)-\vartheta_\lambda(x)+ \sum_{i=1}^m e_{\lambda i}(x)=0\\ 
&e_{\lambda i}(x)\in N((-\infty,0],  g_i(x, \hat y_\lambda(x), \lambda(x))+ \hat u_\lambda(x))\quad \forall i=1,2,...,m. 
\end{align*} 
Hence, setting $\xi(x):=\sum_{i=1}^m e_{\lambda i}(x)$, for a.e. $x\in \Gamma$ one has  
\begin{align*}
&\xi(x)=	-\alpha(\lambda(x))-\beta(\lambda(x))\hat  u_\lambda(x)+\vartheta_\lambda(x);\\
&\xi(x)- \sum_{j\neq i}e_{\lambda j}(x)\in N((-\infty,0],    g_i(x, \hat y_\lambda(x), \lambda(x))+ \hat u_\lambda(x))), \quad \forall i=1,2,...,m.   
\end{align*} 
Let $g(x, \hat y_\lambda(x), \lambda(x))=\max \limits_{1\leq i\leq m}{ g_i(x, \hat y_\lambda(x), \lambda(x))}$
 and $G_i[x,\lambda]=g_i(x, \hat y_\lambda(x), \lambda(x))+\hat u_\lambda(x)$ for all $x\in \Gamma$. We claim that 
 \begin{equation*}
 	g(x, \hat y_\lambda(x), \lambda(x)) +\hat u_\lambda(x)=P_{(- \infty,0]} \left( \frac{1}{\beta(\lambda(x))}(\vartheta_\lambda(x)-\alpha(\lambda(x))) + g(x, \hat y_\lambda(x), \lambda(x))\right)
 \end{equation*} for a.e. $x\in \Gamma$, where $P_{(- \infty,0]}(a)$ is the metric projection of $a$ onto $(- \infty,0]$.

For any $i\in \{1,2,...,m\}$ and $\eta\in (- \infty,0]$, we have
$$
(\xi(x)- \sum_{j\neq i} e_{\lambda j}(x))(\eta -G_i[x,\lambda])\leq 0,
$$ which implies that
\begin{align*}
\xi(x)(\eta-G_i[x,\lambda])&\leq \sum_{j\neq i} e_{\lambda j}(x)(\eta -G_i[x,\lambda])\\
&\leq \sum_{j\neq i} e_{\lambda j}(x)(\eta -G_j[x,\lambda]) + \sum_{j\neq i} e_{\lambda j}(x))(G_j[x,\lambda] -G_i[x,\lambda]).
\end{align*} Notice that $e_{\lambda i}(x)\geq 0$ $\forall i=1,2,...,m$. Therefore, if $x\in \{x'\in \Gamma: g_i(x',\hat y_\lambda(x'),\lambda (x'))=g(x',\hat y_\lambda(x'),\lambda (x'))\}$ then one has
$$\xi(x)(\eta-G_i[x,\lambda])\leq  0,$$ 
which entails that $$
\xi(x)\in N((-\infty;0], G_i[x,\lambda]),
$$ and so,   
\begin{align*}
\vartheta_\lambda(x)-\alpha(\lambda(x))-\beta(\lambda(x))\hat u_\lambda(x)\in N((- \infty,0], g_i(x, \hat y_\lambda(x), \lambda(x)) +\hat u_\lambda(x))\end{align*}
or 
\begin{align}\label{NormalCone1}
	\vartheta_\lambda(x)-\alpha(\lambda(x))-\beta(\lambda(x))\hat u_\lambda(x)\in N((- \infty,0], g(x, \hat y_\lambda(x), \lambda(x)) +\hat u_\lambda(x)).\end{align} 
Since $\Gamma= \cup_{1\leq i\leq m} \{x\in \Gamma: g_i(x,\hat y_\lambda(x),\lambda (x))=g(x,\hat y_\lambda(x),\lambda (x))\}$, the formula \eqref{NormalCone1} is valid for all $x\in \Gamma$. Hence, we conclude that
$$
\frac{1}{\beta(\lambda(x))}(\vartheta_\lambda(x)-\alpha(\lambda(x))) -\hat u_\lambda(x)\in N((- \infty,0], g(x, \hat y_\lambda(x), \lambda(x)) +\hat u_\lambda(x))
$$ or equivalently
\begin{align*}
&\frac{1}{\beta(\lambda(x))}(\vartheta_\lambda(x)-\alpha(\lambda(x))) + g(x, \hat y_\lambda(x), \lambda(x))-(g(x, \hat y_\lambda(x), \lambda(x))+\hat u_\lambda(x))\\
&\quad  \quad \quad \in N((- \infty,0], g(x, \hat y_\lambda(x), \lambda(x)) +\hat u_\lambda(x)) \quad \text{a.e. }x\in \Gamma.
\end{align*} By \cite[Theorem 5.2]{Brezis1}, for a.e. $x\in \Gamma$ we obtain
\begin{equation}\label{MetricProj1}
g(x, \hat y_\lambda(x), \lambda(x)) +\hat u_\lambda(x)=P_{(- \infty,0]}\left( \frac{1}{\beta(\lambda(x))}(\vartheta_\lambda(x)-\alpha(\lambda(x))) + g(x, \hat y_\lambda(x), \lambda(x))\right).
\end{equation} 
Besides, from  \eqref{HolderCond1} and the embedding $W^{1,r}(\Omega)\hookrightarrow C(\bar \Omega)$, it follows that 
\begin{align*}
\|\hat y_\lambda-\bar y\|_{L^\infty(\bar\Omega)}\leq C\|\hat y_\lambda-\bar y\|_{W^{1,r}(\bar\Omega)}\leq Cl_0\|\lambda-\bar \lambda\|^{1/2}\leq Cl_0 s_0^{1/2}.
\end{align*} Hence
$$
\|\hat y_\lambda\|_{L^\infty(\bar\Omega)}\leq  \|\bar y\|_{L^\infty(\bar\Omega)}+ Cl_0 s_0^{1/2}.
$$ Also, one has
$$
\|\lambda\|_{L^\infty(\Gamma)} \leq \|\bar \lambda\|_{L^\infty(\Gamma)} +\|\lambda-\bar \lambda\|_{L^\infty(\Gamma)} \leq \|\bar \lambda\|_{L^\infty(\Gamma)}+ s_0.
$$ 
Putting
$
M=C l_0 s_0^{1/2} + \|\bar y\|_{L^\infty(\bar\Omega)}+ s_0 + \|\bar \lambda\|_\infty$,
we have
\begin{align}\label{BoundM1}
	\|\bar y\|_{L^\infty(\bar\Omega)} + \|\bar \lambda\|_{L^\infty(\Gamma)}\leq M,\ \|\hat y_\lambda\|_{L^\infty(\bar\Omega)} +\|\lambda\|_{L^\infty(\Gamma)}\leq M.
\end{align} 
It follows from assumption $(H_2)$ and \eqref{BoundM1} that there exists $k_{g_iM}$ such that 
\begin{align*}
	|g_i(x, \hat y_\lambda(x), \lambda(x)|&\leq |g_i(x, \hat y_\lambda(x), \lambda(x)- g_i(x, 0, 0)| + |g_i(x, 0, 0)| \notag\\
	&\leq k_{g_iM}(|\hat y_\lambda(x)| + |\lambda(x)|)+ |g_i(x, 0, 0)|\leq k_{g_iM}M+ |g_i(x, 0, 0)|  
\end{align*}
for all $i=1,2,...,m$. Putting $k_{gM}=\max \limits_{1\leq i\leq m}{ k_{g_iM}}$, we have $$
	|g_i(x, \hat y_\lambda(x), \lambda(x)|\leq k_{gM}M+ |g_i(x, 0, 0)|  
$$
for all $i=1,2,...,m$. By a similar argument, the assumptions $(H_1)$, $(H3)$ and \eqref{BoundM1} implies that there exist $k_{LM}, k_{\ell M}, k_{gM}, k_{\alpha M}$ such that 
\begin{align*}
	&|L_y(x,\hat y_\lambda(x))|\leq k_{LM}M+|L(x,0)|, \notag\\
	&|\ell(x',\hat y_\lambda(x'))|\leq k_{\ell M}M+|\ell(x',0)|,\notag \\
	& |g_{iy}(x', \hat y_\lambda(x'),\lambda(x'))|\leq  k_{gM} M + |g_{iy}(x', 0, 0)|, \\
	&|\alpha(\lambda(x'))|\leq  k_{\alpha M}M +|\alpha(0)| \notag
\end{align*} for a.e. $x\in \Omega, x'\in \Gamma$,  for all $i=1,2,...,m$. Hence $L_y(\cdot,\hat y_\lambda) \in L^\infty(\Omega)$ and $\ell(\cdot,\hat y_\lambda),\ g_{i}(\cdot, \hat y_\lambda,\lambda),\\ g_{iy}(\cdot, \hat y_\lambda,\lambda),\ \alpha(\lambda)\in L^\infty(\Gamma)$ and so, $g(\cdot, \hat y_\lambda, \lambda)\in L^\infty(\Gamma)$ and $\sum \limits_{i=1}^{m} g_{iy}(\cdot, \hat y_\lambda,\lambda)e_{\lambda i}\in L^2(\Gamma)$. Combining this with   \eqref{adj} and $(H4)$ yields $\vartheta_\lambda \in W^{1,r}(\Omega)$ and so, $\vartheta_\lambda \in C(\bar \Omega)$. 

Using the non-expansive property of metric projection (see \cite[Proposition 5.3]{Brezis1})  and the fact  $P_{(- \infty,0]}(0)=0$, it follows from \eqref{MetricProj1} that 
\begin{align*}
|\hat u_\lambda(x)|&\leq  |g(x, \hat y_\lambda(x), \lambda(x))| +\left|\frac{1}{\beta(\lambda(x))}(\vartheta_\lambda(x)-\alpha(\lambda(x))) + g(x, \hat y_\lambda(x), \lambda(x))\right| \\
&\leq  |g(x, \hat y_\lambda(x), \lambda(x))| +\frac{1}{\gamma}(|\vartheta_\lambda(x)| +|\alpha(\lambda(x))| ) + |g(x, \hat y_\lambda(x), \lambda(x))| .
\end{align*} Hence,
\begin{align*}
|\hat u_\lambda(x)|\leq 2|g(x, \hat y_\lambda(x), \lambda(x))|  + \frac{1}{\gamma}(|\vartheta_\lambda(x)| +|\alpha(\lambda(x))| ) 
\end{align*}  
for a.e. $x\in \Gamma$.
Since  $g(\cdot, \hat y_\lambda, \lambda), \  \tau\vartheta_\lambda,\ \alpha(\lambda)\in L^\infty(\Gamma)$, we obtain $\hat u_\lambda \in L^\infty(\Gamma)$. The lemma is proved.
\end{proof}

It remains to prove \eqref{HolderCond2} of Theorem 2.1.  For this we have the following lemma. 

\begin{lemma} If the assumption $(H5)$ is valid then  there exist positive numbers  $l_1$ and $s_1$ such that for all $\lambda \in B_{L^{\infty}(\Gamma)}(\bar \lambda, s_1),$  and any $(\hat y_\lambda, \hat u_\lambda)\in \mathcal S_{R_0}(\lambda)$ one has
$$
\|\hat y_\lambda-\bar y\|_{W^{1,r}(\Omega)} + \|\hat u_\lambda-\bar u\|_\infty \leq  l_1\|\lambda-\bar \lambda\|_{L^\infty(\Gamma)}^{1/2}.
$$
\end{lemma}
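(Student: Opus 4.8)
The plan is to bootstrap the $L^2$-estimate of Lemma \ref{key2} (the $L^2(\Gamma)$-part of \eqref{HolderCond3}) to an $L^\infty$-estimate for $\hat u_\lambda-\bar u$, the decisive new ingredient being assumption $(H5)$. Write $\epsilon:=\|\lambda-\bar\lambda\|_{L^\infty(\Gamma)}^{1/2}$. I shall use freely, for $\lambda$ in a small ball around $\bar\lambda$: the bound $\|\hat y_\lambda-\bar y\|_{W^{1,r}(\Omega)}+\|\hat u_\lambda-\bar u\|_{L^2(\Gamma)}\le l_0\epsilon$; the facts that $\hat u_\lambda\in L^\infty(\Gamma)$ and $\vartheta_\lambda\in W^{1,r}(\Omega)\hookrightarrow C(\bar\Omega)$ from Lemma \ref{LemmaReg}, together with the analogous statements for $(\bar y,\bar u)$ and its multiplier $(\vartheta,e_1,\dots,e_m)$ --- these follow by running the proofs of Lemmas \ref{key2}--\ref{LemmaReg} with $\lambda=\bar\lambda$, since $\mathcal S_{R_0}(\bar\lambda)=\{(\bar y,\bar u)\}$ once $R_0$ is small enough by local strong optimality --- and the fact that all these norms are bounded uniformly in $\lambda$; and the projection identity \eqref{MetricProj1}, valid both for $(\hat y_\lambda,\hat u_\lambda)$ at $\lambda$ and for $(\bar y,\bar u)$ at $\bar\lambda$.

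\emph{Step 1 (localising the multipliers via $(H5)$).} Put $\delta:=-\max_{1\le i\le m}\max_{k\ne i}\operatorname*{ess~sup}\limits_{x\in\Gamma_i}(g_k[x]-g_i[x])>0$; the sets $\Gamma_i$ are pairwise disjoint up to a null set, since the defining gap inequalities contradict each other on overlaps. Using $(H2)$ and $\|\hat y_\lambda-\bar y\|_{L^\infty(\Omega)}+\|\lambda-\bar\lambda\|_{L^\infty(\Gamma)}\le C\epsilon$, for $\lambda$ close enough to $\bar\lambda$ one has $g_i(x,\hat y_\lambda(x),\lambda(x))-g_k(x,\hat y_\lambda(x),\lambda(x))\ge\delta-C\epsilon>0$ for all $k\ne i$ and a.e.\ $x\in\Gamma_i$; combined with feasibility $g_i(\cdot,\hat y_\lambda,\lambda)+\hat u_\lambda\le0$ and the complementarity \eqref{com}, this forces $e_{\lambda k}=0$ on $\Gamma_i$ for every $k\ne i$ (and, in the same way, $e_k=0$ on $\Gamma_i$ for $k\ne i$). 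Hence, by the stationary condition \eqref{sta}, $e_{\lambda i}=\vartheta_\lambda-\alpha(\lambda)-\beta(\lambda)\hat u_\lambda$ on $\Gamma_i$, so the adjoint boundary condition \eqref{adj} collapses to the Robin-type identity
\[
\partial_\nu\vartheta_\lambda+\chi_\lambda\vartheta_\lambda=-\ell_y(\cdot,\hat y_\lambda,\lambda)+\chi_\lambda\big(\alpha(\lambda)+\beta(\lambda)\hat u_\lambda\big)=:\Phi_\lambda\qquad\text{on }\Gamma,
\]
where $\chi_\lambda:=g_{iy}(\cdot,\hat y_\lambda,\lambda)$ on $\Gamma_i$; likewise $\partial_\nu\vartheta+\bar\chi\vartheta=\bar\Phi$ with $\bar\chi:=g_{iy}[\cdot]$ on $\Gamma_i$. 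By $(H4)$, $\bar\chi\ge0$, and by $(H2)$, $\|\chi_\lambda-\bar\chi\|_{L^\infty(\Gamma)}\le C\epsilon$, hence $\chi_\lambda\ge-C\epsilon$.

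\emph{Step 2 ($W^{1,r}$-estimate for $w:=\vartheta_\lambda-\vartheta$).} Subtracting the two reduced adjoint systems, $w$ is the weak solution of $-Aw+h_y(\cdot,\hat y_\lambda)w=f$ in $\Omega$, $\partial_\nu w+\chi_\lambda w=g'$ on $\Gamma$, where $f=-(L_y(\cdot,\hat y_\lambda)-L_y[\cdot])-(h_y(\cdot,\hat y_\lambda)-h_y[\cdot])\vartheta$ and $g'=\Phi_\lambda-\bar\Phi-(\chi_\lambda-\bar\chi)\vartheta$. The Lipschitz estimates $(H1)$--$(H3)$, the bounds $\|\hat y_\lambda-\bar y\|_{L^\infty(\Omega)}\le C\epsilon$ and $\|\chi_\lambda-\bar\chi\|_{L^\infty(\Gamma)}\le C\epsilon$, and the uniform $L^\infty$-bounds on $\vartheta$, $\bar u$, $\hat u_\lambda$ give $\|f\|_{L^\infty(\Omega)}\le C\epsilon$ and $g'=\chi_\lambda\beta(\lambda)(\hat u_\lambda-\bar u)+r_\lambda$ with $\|r_\lambda\|_{L^\infty(\Gamma)}\le C\epsilon$; since $\|\hat u_\lambda-\bar u\|_{L^2(\Gamma)}\le l_0\epsilon$, this yields $\|g'\|_{L^2(\Gamma)}\le C\epsilon$. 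The bilinear form attached to $\big(-A+h_y(\cdot,\hat y_\lambda);\ \partial_\nu+\chi_\lambda\big)$ is, for $\lambda$ close enough to $\bar\lambda$, coercive on $H^1(\Omega)$ with a constant independent of $\lambda$: $C_0$, $a_0\ge0$, $a_0\not\equiv0$ are fixed, $h_y(\cdot,\hat y_\lambda)\ge0$ by $(H4)$, and the $O(\epsilon)$ negative part of $\chi_\lambda$ is absorbed by the trace inequality. A Lax--Milgram argument followed by the $W^{1,r}$-regularity of \cite[Theorem 3.1]{Son2}, applied with a constant uniform in $\lambda$, then gives $\|w\|_{W^{1,r}(\Omega)}\le C\big(\|f\|_{L^\infty(\Omega)}+\|g'\|_{L^2(\Gamma)}\big)\le C\epsilon$, whence $\|w\|_{C(\bar\Omega)}\le C\epsilon$ by the embedding $W^{1,r}(\Omega)\hookrightarrow C(\bar\Omega)$.

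\emph{Step 3 (conclusion, and the main obstacle).} On $\Gamma_i$ the function $g$ in \eqref{MetricProj1} coincides with $g_i$; writing \eqref{MetricProj1} at $\lambda$ and at $\bar\lambda$, and using the $1$-Lipschitz property of $P_{(-\infty,0]}$, $\beta(\lambda(\cdot))\ge\gamma/2$ and $(H2)$--$(H3)$, one obtains a.e.\ on $\Gamma_i$ the pointwise estimate $|\hat u_\lambda(x)-\bar u(x)|\le\frac{2}{\gamma}|w(x)|+2\,|g_i(x,\hat y_\lambda(x),\lambda(x))-g_i[x]|+C\|\lambda-\bar\lambda\|_{L^\infty(\Gamma)}\le\frac{2}{\gamma}|w(x)|+C\epsilon$. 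Since $\Gamma=\bigcup_i\Gamma_i$, taking the essential supremum and invoking Step 2 gives $\|\hat u_\lambda-\bar u\|_{L^\infty(\Gamma)}\le C\epsilon$; together with \eqref{HolderCond1} this is the asserted inequality, and admissible $l_1$, $s_1$ are read off. The real difficulty --- and the reason $(H5)$ is indispensable --- is a circular dependence that would otherwise occur: the datum of the adjoint equation for $w$ carries the term $\sum_{i=1}^m g_{iy}(\cdot,\hat y_\lambda,\lambda)(e_{\lambda i}-e_i)$, while $e_{\lambda i}-e_i$ is a priori controlled only through $\|w\|$ and $\|\hat u_\lambda-\bar u\|$ with coefficients that do not become small as $\lambda\to\bar\lambda$ (and the coupling of all $m$ multipliers makes things worse as $m$ grows). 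Assumption $(H5)$ dissolves this obstruction: on each $\Gamma_i$ a single multiplier survives and, via \eqref{sta}, is absorbed into a Robin boundary operator whose favourable sign comes from $(H4)$, so that the only coupling left is to $\hat u_\lambda-\bar u$, which is already controlled in $L^2$ by Lemma \ref{key2}; the elliptic estimate then closes. The remaining care goes into the uniformity in $\lambda$ of the coercivity and regularity constants.
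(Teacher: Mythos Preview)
Your proposal is correct and follows essentially the same route as the paper: use $(H5)$ to kill all but one multiplier on each $\Gamma_i$, substitute the surviving multiplier via \eqref{sta} to obtain a Robin-type adjoint equation, subtract to estimate $\vartheta_\lambda-\vartheta$ in $W^{1,r}$ using the already-available $L^2$-control on $\hat u_\lambda-\bar u$, and finish with the $1$-Lipschitz property of the projection in \eqref{MetricProj1}. The only notable technical variation is that the paper writes the difference equation with the \emph{reference}-point coefficients $h_y[\cdot]$ and $\sum_i g_{iy}[\cdot]\chi_i$, so the nonnegativity from $(H4)$ applies directly and the estimate \eqref{state} can be invoked without any absorption argument; your choice of the perturbed coefficients $h_y(\cdot,\hat y_\lambda)$ and $\chi_\lambda$ forces the extra step of handling the $O(\epsilon)$ negative part of $\chi_\lambda$ via a trace inequality, which is fine but slightly less clean.
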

\begin{proof} Let  $\lambda \in B_{L^\infty(\Gamma)}(\bar \lambda, s_0)$, and  $( \hat y_\lambda,  \hat u_\lambda)\in \mathcal S_{R_0}(\lambda)$, and $M=C l_0 s_0^{1/2} + \|\bar y\|_{L^\infty(\bar\Omega)}+ s_0 + \|\bar \lambda\|_\infty$.  Analysis similar to that in the proof of Lemma \ref{LemmaReg} shows that 
\begin{align}\label{BoundM}
\|\bar y\|_{L^\infty(\bar\Omega)} + \|\bar \lambda\|_{L^\infty(\Gamma)}\leq M,\ \|\hat y_\lambda\|_{L^\infty(\bar\Omega)} +\|\lambda\|_{L^\infty(\Gamma)}\leq M
\end{align} 
and $g_i(\cdot, \hat y_\lambda, \lambda)\in L^\infty(\Gamma)$ for all $i=1,2,...,m$. In particular, $g_i[\cdot]\in L^\infty(\Gamma)$ and so, $\operatorname*{ess~sup}\limits_{x\in\Gamma_i}(g_k[x]-g_i[x])>-\infty$ for all $1\leq i, k\leq m,\ k\neq i.$ Consequently, the assumption (H5) implies that there exists a number $\sigma_1>0$ such that 
$$g_k[x] - g_i[x] \leq -\sigma_1 \ \text{a.e. } x \in \Gamma_i,$$ 
which gives 
\begin{align}\label{bdt1}
G_k(\bar y, \bar u, \bar \lambda) \leq G_i(\bar y, \bar u, \bar \lambda)-\sigma_1 \leq -\sigma_1 <0\ \text{a.e. } x \in \Gamma_i	
\end{align} for all $1\leq i, k\leq m,\ k\neq i$, where $G_j(y, u, \lambda)=g_j(\cdot,y,\lambda)+u$ with $j=1,2,...,m$.   

It follows from \eqref{BoundM} that there exist Lipschitz constants  $k_{LM}$, $k_{\ell M}, k_{hM}, k_{gM}, k_{\alpha M}$ and $k_{\beta M}$ under which  $(H1), \ (H2)$ and $(H3)$ hold.  
From the assumption (H2), we have
\begin{align*}
	|g_i[x]- g_i(x, \hat y_\lambda(x), \lambda(x))| &\leq k_{gM}( |\hat y_\lambda(x)-\bar y(x)| + |\lambda(x)-\bar \lambda(x)|)\\
	& \leq k_{gM}\big(C l_0 \|\lambda-\bar \lambda\|_{L^\infty(\Gamma)}^{1/2} + \|\lambda-\bar \lambda\|_{L^\infty(\Gamma)}\big)\\
	& \leq k_{gM}\big(C l_0 \|\lambda-\bar \lambda\|_{L^\infty(\Gamma)}^{1/2} + s_0^{1/2}\|\lambda-\bar \lambda\|^{1/2}_\infty\big)\\
	&\leq k_{gM}\big(Cl_0 + s_0^{1/2}\big)\|\lambda-\bar \lambda\|_{L^\infty(\Gamma)}^{1/2}<\frac{\sigma_1}{4}
\end{align*} for a.e. $x\in \Omega$ and for all $i=1,2,...,m$, whenever $\lambda \in B(\bar \lambda, s_1)$, where
$$
s_1:=\min\left\{\frac{\sigma_1^2}{16k_{gM}^2(Cl_0 + s_0^{1/2})^2},s_0\right\}.$$
Therefore, for all $i,k \in \{1,2,...,m\}$, $k\neq i$, $\lambda \in B(\bar \lambda, s_1)$, we have 
\begin{align*}
	&g_k(x,\hat y_\lambda(x),\lambda(x)) -g_i(x,\hat y_\lambda(x), \lambda(x))\\
	&= (g_k(x,\hat y_\lambda(x),\lambda(x)) -g_k[x])+(g_k[x]-g_i[x])
	+(g_i[x] -g_i(x,\hat y_\lambda(x), \lambda(x)))\\
	&\leq |g_k(x,\hat y_\lambda(x),\lambda(x)) -g_k[x]|+(g_k[x]-g_i[x])
	+|g_i[x] -g_i(x,\hat y_\lambda(x), \lambda(x))|\\
	&\leq  \frac{\sigma_1}{4}-\sigma_1+\frac{\sigma_1}{4} =-\frac{\sigma_1}{2} \quad \text{a.e. }x\in \Gamma_i.
\end{align*}
This implies that 
\begin{align}	\label{bdt2}
G_k(\hat y_\lambda, \hat u_\lambda, \lambda) \leq G_i(\hat y_\lambda, \hat u_\lambda, \lambda)-\frac{\sigma_1}{2}\leq -\frac{\sigma_1}{2} <0 \quad \text{a.e. in } \Gamma_i\end{align}
for all $1\leq i,k\leq m$, $k\neq i$, $\lambda \in B(\bar \lambda, s_1)$.

 Fix $\lambda \in B(\bar \lambda, s_1)$ and $(\hat y_\lambda, \hat u_\lambda)\in \mathcal S_{R_0}(\lambda)$. Since $( \hat y_\lambda,  \hat u_\lambda)$ and $(\bar y, \bar u)$ are  locally optimal solutions of $(P(\lambda))$ and $(P(\bar \lambda))$, respectively, there exist the Lagrange multipliers $(\vartheta_\lambda, e_{\lambda 1}, e_{\lambda 2},...,e_{\lambda m})\in\Lambda[( \hat y_\lambda,  \hat u_\lambda), \lambda]$ and  $(\vartheta, e_1, e_2, ...,e_m)\in \Lambda[(\bar y, \bar u), \bar \lambda]$ satisfying   
\begin{align}
&\begin{cases} 
	&A \vartheta_\lambda +h_y(\cdot, \hat y_\lambda)\vartheta_\lambda = -L_y(\cdot,\hat y_ \lambda) \text{ in } \Omega \\
 &\partial_\nu \vartheta_\lambda = -\ell_y(\cdot,\hat y_\lambda,\lambda)-\sum \limits_{i=1}^{m}g_{iy}(\cdot, \hat y_\lambda,\lambda)e_{\lambda i} \text{ on } \Gamma, \end{cases}\label{FOC1}\\
 &\begin{cases} 
 	&A \vartheta +h_y[\cdot]\vartheta = -L_y[\cdot] \text{ in } \Omega \\
 	&\partial_\nu \vartheta = -\ell_y[\cdot]-\sum \limits_{i=1}^{m}g_{iy}[\cdot]e_{i} \text{ on } \Gamma, \end{cases} \label{FOC2}\\
-&\vartheta_\lambda +\alpha(\lambda)+\beta(\lambda) \hat u_\lambda +\sum \limits_{i=1}^{m}e_{\lambda i}=0,\label{FOC3}\\
-&\vartheta +\alpha(\bar \lambda)+\beta(\bar \lambda)\bar  u +\sum \limits_{i=1}^{m}e_{i}=0,\label{FOC4}\\
&e_{\lambda i}(x)(g_i(x,  \hat y_\lambda(x), \lambda (x))+\hat u_\lambda(x))=0, \quad e_{\lambda i}(x) \geq 0, \label{FOC5}\\
&e_i(x)(g_i[x]+ \bar u(x))=0, \quad e_i(x) \geq 0, \label{FOC7}\\
&g_i(x,  \hat y_\lambda(x), \lambda (x))+ \hat u_\lambda(x)\leq 0, \label{FOC9} \\
&g_i[x] +\bar  u(x)\leq 0 \label{FOC10}
\end{align} 
for a.e. $x\in\Gamma$, for all $i=1,2,...,m.$

Once again, analysis similar to that in the proof of Lemma \ref{LemmaReg} shows that $\vartheta_\lambda \in W^{1,r}(\Omega)$. Moreover, by \eqref{state}, we have  
\begin{align*}
\|\vartheta_\lambda\|_{W^{1,r}(\Omega)}&\leq C_1\big(\|L_y(\cdot,\hat y_\lambda)\|_{L^2(\Omega)}+\|\ell_y(\cdot,\hat y_\lambda,\lambda)+\sum \limits_{i=1}^{m} g_{iy}(\cdot, \hat y_\lambda,\lambda)e_{\lambda i}\|_{ L^2(\Gamma)}\big)\\
&\leq C_1\Big(\big(k_{LM}M+\|L(\cdot,0)\|_{L^\infty(\Omega)}\big)|\Omega|^{1/2}+\big(k_{\ell M}M+\|\ell(x,0,0)\|_{L^\infty(\Gamma)}\big)|\Gamma|^{1/2}\\
& \qquad \qquad \qquad \qquad +\big(mk_{g M}M+\sum \limits_{i=1}^{m} \|g_{iy}(\cdot,0,0)\|_{L^\infty(\Gamma)}\big)M_0  \Big):=M_1.
\end{align*}
In particular, $\vartheta \in W^{1,r}(\Omega)$ and $\|\vartheta\|\leq M_1$. Notice that $M_1$ do not depend on $\lambda$. From this and the embedding $W^{1,r}(\Omega)\hookrightarrow C(\bar \Omega)$, it follows that
there exists a positive number $M_2$ independent of $\lambda$ such that $$\|\vartheta_\lambda\|_{C(\bar\Omega)}+\|\vartheta\|_{C(\bar\Omega)}+\|\vartheta_\lambda\|_{L^2(\Omega)}+ \|\tau \vartheta_\lambda\|_{L^\infty(\Gamma)}+ \|\tau \vartheta\|_{L^\infty(\Gamma)}<M_2.$$  
 Besides, it follows from \eqref{bdt1}, \eqref{bdt2}, \eqref{FOC5} and \eqref{FOC7} that $e_k=e_{\lambda k}=0$ a.e. in $\Gamma_i$ for all $1\leq i,k\leq m$, $k\neq i$.
 	Combining this with \eqref{FOC3} and \eqref{FOC4} we obtain
 \begin{align*}
 	e_i=\vartheta-\alpha(\bar \lambda)-\beta(\bar \lambda)\bar u \quad \text{and} \quad  e_{\lambda i}=\vartheta_\lambda -\alpha(\lambda)-\beta(\lambda) \hat u_\lambda\ \text{ a.e. in } \Gamma_i. 
 \end{align*} 
 Hence 
 \begin{align}
 	e_i=(\vartheta-\alpha(\bar \lambda)-\beta(\lambda)\bar u))\chi_i \ \text{ and } \   e_{\lambda i}=(\vartheta_\lambda -\alpha(\lambda)-\beta(\lambda) \hat u_\lambda)\chi_i\ \text{ a.e. in} \ \Gamma, \label{eq31}
 \end{align}
 where $\chi_i$ is the characteristic function of the set $\Gamma_i$, $i=1,2,...,m$, that is 
 \begin{align*}
 	\chi_i(x)= \begin{cases}
 		1 \quad \text{if }x\in \Gamma_i,\\
 		0 \quad \text{if }x\notin \Gamma_i. 
 	\end{cases}
 \end{align*} 
 Substituting \eqref{eq31} into \eqref{FOC1} and \eqref{FOC2}, we see that 
 \begin{align*}
 	&\begin{cases} A\vartheta_\lambda  + h_y(\cdot,\hat y_\lambda)\vartheta_\lambda = -L_y(\cdot,\hat y_\lambda)\ {\rm in}\ \Omega,\\ 
 		\partial_\nu \vartheta_\lambda +\sum\limits_{i=1}^mg_{iy}(\cdot,\hat y_\lambda,\lambda)\chi_i\vartheta_\lambda=-\ell_y(\cdot,\hat y_\lambda,\lambda)+ \sum \limits_{i=1}^mg_{iy}(\cdot,\hat y_\lambda,\lambda) (\alpha(\lambda)+\beta(\lambda) \hat u_\lambda)\chi_i   \ {\rm on}\ \Gamma, 
 	\end{cases}\\
 &\begin{cases} A\vartheta  + h_y[\cdot]\vartheta= -L_y[\cdot]\ {\rm in}\ \Omega,\\ 
 	\partial_\nu \vartheta +\sum\limits_{i=1}^mg_{iy}[\cdot]\chi_i\vartheta=-\ell_y[\cdot]+ \sum \limits_{i=1}^mg_{iy}[\cdot](\alpha(\bar \lambda)+\beta(\bar \lambda) \bar u)\chi_i   \ {\rm on}\ \Gamma. 
 \end{cases}
 \end{align*}
By subtracting the above equations, we get
\begin{align*}
	\begin{cases} &A(\vartheta-\vartheta_\lambda)  + h_y[\cdot ](\vartheta-\vartheta_\lambda) = L_y(\cdot,\hat y_\lambda )-L_y[\cdot]+(h_y(\cdot,\hat y_\lambda)-h_y[\cdot])\vartheta_\lambda \ {\rm in}\ \Omega,\\ 
		&\partial_\nu (\vartheta-\vartheta_\lambda) +\sum\limits_{i=1}^mg_{iy}[\cdot]\chi_i(\vartheta-\vartheta_\lambda)=\ell_y(\cdot,\hat y_\lambda,\lambda)-\ell_y[\cdot]+\sum\limits_{i=1}^m(g_{iy}(\cdot,\hat y_\lambda,\lambda)-g_{iy}[\cdot])\chi_i\vartheta_\lambda
		\\
		&\qquad \qquad \quad +\sum \limits_{i=1}^mg_{iy}[\cdot](\alpha(\bar \lambda)+\beta(\bar \lambda) \bar u)\chi_i-\sum \limits_{i=1}^mg_{iy}(\cdot,\hat y_\lambda,\lambda) (\alpha(\lambda)+\beta(\lambda) \hat u_\lambda)\chi_i\  {\rm on}\ \Gamma. 
	\end{cases}
\end{align*}
By (H4) and \eqref{state}, we get
\begin{align}
&\|\vartheta-\vartheta_\lambda\|_{C(\bar \Omega)}\leq  C\|\vartheta-\vartheta_\lambda\|_{W^{1,r}(\Omega)}\leq CC_1\big(\|I_\Omega\|_{L^2(\Omega)}+\|I_\Gamma\|_{L^2(\Gamma)}\big), \label{EtimAdjointSol}
\end{align}
where 
\begin{align*}
	&I_\Omega:=L_y(\cdot,\hat y_\lambda )-L_y[\cdot]+(h_y(\cdot,\hat y_\lambda)-h_y[\cdot])\vartheta_\lambda\\
	&I_\Gamma:=\ell_y(\cdot,\hat y_\lambda,\lambda)-\ell_y[\cdot]+\sum\limits_{i=1}^m(g_{iy}(\cdot,\hat y_\lambda,\lambda)-g_{iy}[\cdot])\chi_i\vartheta_\lambda
	\\
	&\qquad \qquad \quad +\sum \limits_{i=1}^mg_{iy}[\cdot](\alpha(\bar \lambda)+\beta(\bar \lambda) \bar u)\chi_i-\sum \limits_{i=1}^mg_{iy}(\cdot,\hat y_\lambda,\lambda) (\alpha(\lambda)+\beta(\lambda) \hat u_\lambda)\chi_i.
	\end{align*}
We have 
\begin{align*}
\|I_\Omega \|_{L^2(\Omega)}&\leq \|(h_y(\cdot, \hat y_\lambda)-h_y[\cdot])\vartheta_\lambda\|_{L^2(\Omega)} +\|L_y(\cdot, \hat y_\lambda)- L_y[\cdot] \|_{L^2(\Omega)}\\
&\leq \|h_y[\cdot]-h_y(\cdot, \hat y_\lambda)\|_{L^\infty(\Omega)} \|\vartheta_\lambda\|_{L^2(\Omega)} +\|L_y(\cdot, \hat y_\lambda)- L_y[\cdot] \|_{L^\infty(\Omega)} |\Omega|^{1/2}\\
&\leq  k_{hM}\|\hat y_\lambda-\bar y\|_{L^\infty(\Omega)} M_2 + |\Omega|^{1/2}k_{LM}\|\hat y_\lambda-\bar y\|_{L^\infty(\Omega)}\\
&\leq (k_{hM} M_2+|\Omega|^{1/2}k_{LM})C\|\hat y_\lambda-\bar y\|_{W^{1,r}(\Omega)}\\
&\leq (k_{hM} M_2+|\Omega|^{1/2}k_{LM})Cl_0\|\lambda- \bar \lambda\|_{L^\infty(\Gamma)}^{1/2}.
\end{align*} 
Hence
\begin{align}\label{AdjointEstim1}
\|I_\Omega \|_{L^2(\Omega)}\leq k_1\|\lambda-\bar \lambda\|_{L^\infty(\Gamma)}^{1/2},
\end{align}where $k_1:=(k_{hM} M_2+|\Omega|^{1/2}k_{LM})Cl_0$. It remains to estimate $\|I_\Gamma\|_{L^2(\Gamma)}$.  For this, we have 
\begin{align}\label{EstimCase2}
\|I_\Gamma\|_{L^2(\Gamma)}&\leq \| \ell_y(\cdot, \hat y_\lambda, \lambda)-\ell_y[\cdot] \|_{L^2(\Gamma)} + \left\| \sum\limits_{i=1}^m(g_{iy}(\cdot,\hat y_\lambda,\lambda)-g_{iy}[\cdot])\chi_i\vartheta_\lambda\right\|_{L^2(\Gamma)}\notag \\
&+\left\|\sum \limits_{i=1}^mg_{iy}[\cdot](\alpha(\bar \lambda)+\beta(\bar \lambda) \bar u)\chi_i-\sum \limits_{i=1}^mg_{iy}(\cdot,\hat y_\lambda,\lambda) (\alpha(\lambda)+\beta(\lambda) \hat u_\lambda)\chi_i\right\|_{L^2(\Gamma)}.
\end{align} We now give estimates for terms on the right hand side of \eqref{EstimCase2}.  For the first term, using   \eqref{HolderCond1}, we have
\begin{align}\label{BoundTerm1}
	\| \ell_y[\cdot]-\ell_y(\cdot, \hat y_\lambda, \lambda) \|_{L^2(\Gamma)} &\leq \| \ell_y[\cdot]-\ell_y(\cdot, \hat y_\lambda, \lambda) \|_{L^\infty(\Gamma)} |\Gamma|^{1/2} \notag
	\\
	&\leq k_{\ell M}(\|\bar y - \hat y_\lambda  \|_{L^\infty(\Gamma)}+\|\bar \lambda - \lambda  \|_{L^\infty(\Gamma)}) |\Gamma|^{1/2} \notag\\
	&\leq k_{\ell M}\big(Cl_0\|\bar \lambda - \lambda  \|^{1/2}_{L^\infty(\Gamma)}+s_1^{1/2}\|\bar \lambda - \lambda  \|^{1/2}_{L^\infty(\Gamma)}\big) |\Gamma|^{1/2} \notag\\
	&\leq k_2 \|\lambda-\bar \lambda\|_{L^\infty(\Gamma)}^{1/2},
\end{align} where $k_2:=k_{\ell M}(Cl_0+s_1^{1/2})|\Gamma|^{1/2}$.  Similarly, for the second term, one has 
\begin{align}\label{BoundTerm2}
	 \left\| \sum\limits_{i=1}^m(g_{iy}(\cdot,\hat y_\lambda,\lambda)-g_{iy}[\cdot])\chi_i\vartheta_\lambda\right\|_{L^2(\Gamma)}
	&	\leq\sum\limits_{i=1}^m \|g_{iy}(\cdot,\hat y_\lambda,\lambda)-g_{iy}[\cdot]\|_{L^\infty(\Gamma)}\|\chi_i\vartheta_\lambda\|_{L^2(\Gamma)}\notag\\
	&\leq\sum\limits_{i=1}^m k_{gM}\left(\|\hat y_\lambda-\bar y\|_{L^\infty(\Omega)}+ \|\lambda-\bar \lambda\|_{L^\infty(\Gamma)}\right)\|\chi_i\vartheta_\lambda\|_{L^2(\Gamma)}\notag\\
	&\leq k_{gM}\left(\|\hat y_\lambda-\bar y\|_{L^\infty(\Omega)}+ \|\lambda-\bar \lambda\|_{L^\infty(\Gamma)}\right)\|\vartheta_\lambda\|_{L^2(\Gamma)}\notag\\
	&\leq k_{gM}\left( Cl_0\|\lambda-\bar \lambda\|^{1/2}_{L^\infty(\Gamma)}+s_1^{1/2}\|\lambda-\bar \lambda\|_{L^\infty(\Gamma)}^{1/2}  \right)M_2 \notag\\
	&\leq k_{gM}M_2\left( Cl_0+s_1^{1/2}  \right)\|\lambda-\bar \lambda\|_{L^\infty(\Gamma)}^{1/2} \notag\\
		&\leq k_3\|\lambda-\bar \lambda\|_{L^\infty(\Gamma)}^{1/2},
\end{align} 
where $k_3:=k_{gM}M_2\left( Cl_0+s_1^{1/2}  \right)$. 

To estimate  the third term, we first have from \eqref{BoundM} that
\begin{align}
		& |g_{iy}(\cdot, \hat y_\lambda,\lambda)|_{L^\infty(\Gamma)}\leq  k_{gM} M + \sum \limits_{1\leq i\leq m}\|g_{iy}(\cdot, 0, 0)\|_{L^\infty(\Gamma)}:=M_3, \label{Estim3-1}\\
&\|\beta(\bar \lambda )\|_{L^\infty(\Gamma)} \leq M_4,\ \|\beta(\lambda)\|_{L^\infty(\Gamma)}\leq M_4 \quad \text{with } M_4:=k_{\beta M} M + |\beta( 0)|, \label{Estim3-2}\\
&\|\alpha (\bar \lambda)\|_{L^\infty(\Gamma)}\leq M_5,\ \|\alpha (\lambda(x))\|_{L^\infty(\Gamma)}\leq M_5 \quad \text{with } M_5:=k_{\alpha M} M + |\alpha( 0)|, \label{Estim3-3}
\end{align}
for a.e. $x\in \Gamma$ and for all $ i=1,2,...,m$.
Using \eqref{Estim3-1}-\eqref{Estim3-3} and \eqref{HolderCond1},  we get
\begin{align*}
	&\left\|\sum \limits_{i=1}^mg_{iy}[\cdot](\alpha(\bar \lambda)+\beta(\bar \lambda) \bar u)\chi_i-\sum \limits_{i=1}^mg_{iy}(\cdot,\hat y_\lambda,\lambda) (\alpha(\lambda)+\beta(\lambda) \hat u_\lambda)\chi_i\right\|_{L^2(\Gamma)}\\
	&=\left\|\sum \limits_{i=1}^mg_{iy}(\cdot,\hat y_\lambda,\lambda) (\alpha(\lambda)+\beta(\lambda) \hat u_\lambda)\chi_i-\sum \limits_{i=1}^mg_{iy}[\cdot](\alpha(\bar \lambda)+\beta(\bar \lambda) \bar u)\chi_i\right\|_{L^2(\Gamma)}\\
	&=\sum \limits_{i=1}^m \big\|[ g_{iy}(\cdot,\hat y_\lambda,\lambda)\left(\alpha\left(\lambda\right)+\beta\left(\lambda\right) \hat u_\lambda-\alpha\left(\bar \lambda\right)-\beta\left(\bar \lambda\right)\bar  u\right)
	\\
	&\qquad \qquad  \qquad \qquad \qquad +\left(g_{iy}(\cdot,\hat y_\lambda,\lambda)-g_{iy}[\cdot]\right)\left(\alpha\left(\bar \lambda\right)+\beta\left(\bar \lambda\right)\bar  u\right) ]\chi_i \big\|_{L^2( \Gamma)}\\
	&\leq \sum \limits_{i=1}^m \big\|\big(g_{iy}(\cdot,\hat y_\lambda,\lambda)\big(\alpha\left(\lambda\right)-\alpha\left(\bar \lambda\right)\big)\big)\chi_i\big\|_{L^2(\Gamma)} \\
	& \quad +\sum \limits_{i=1}^m \big\|\big(g_{iy}(\cdot,\hat y_\lambda,\lambda)\big(\beta\left(\lambda\right) \hat u_\lambda-\beta\left(\bar \lambda\right)\bar  u\big)\big)\chi_i\big\|_{L^2(\Gamma)}\\
	&\quad + \sum \limits_{i=1}^m \|\big(g_{iy}(\cdot,\hat y_\lambda,\lambda)-g_{iy}[\cdot]\big)\big(\alpha\left(\bar \lambda\right)+\beta\left(\bar \lambda\right)\bar  u\big)\chi_i\|_{L^2(\Gamma)} \\
	&\leq  \sum \limits_{i=1}^m M_3 k_{\alpha M}\|\lambda-\bar \lambda\|_{L^\infty(\Gamma)}\|\chi_i\|_{L^2(\Gamma)} + \sum \limits_{i=1}^m M_3\|(\beta\left( \lambda \right) \hat u_\lambda-\beta\left(\bar \lambda\right)\bar  u)\chi_i \|_{L^2(\Gamma)}
	\\
	&\quad +k_{gM}\left(\|\hat y_\lambda-\bar y\|_{L^\infty(\Omega)} +\|\lambda-\bar \lambda\|_{L^\infty(\Gamma)}\right)\|\alpha\left(\bar \lambda\right)+\beta\left(\bar \lambda\right)\bar  u \|_{L^2(\Gamma)}
\end{align*}
\begin{align*}
\quad\quad& \leq M_3 k_{\alpha M}|\Gamma|\|\lambda-\bar \lambda\|_{L^\infty(\Gamma)} +  \sum \limits_{i=1}^m  M_3\|\left(\beta\left(\lambda\right) \left(\hat u_\lambda-\bar u\right)+ \left(\beta\left(\lambda\right)-\beta\left(\bar \lambda\right)\right)\bar  u\right)\chi_i\|_{L^2(\Gamma)}\\
	&\quad+k_{gM}\left(C l_0\|\lambda-\bar \lambda\|_{L^\infty(\Gamma)}^{1/2} +\|\lambda-\bar \lambda\|_{L^\infty(\Gamma)}\right)(M_5|\Gamma|^{1/2}+M_4\|\bar  u \|_{L^2(\Gamma)})\\
&\leq M_3 k_{\alpha M}|\Gamma|\|\lambda-\bar \lambda\|_{L^\infty(\Gamma)} +  M_3M_4\|\hat u_\lambda-\bar u\|_{L^2(\Gamma)}+2M_3 k_{\beta M}\|\lambda-\bar \lambda\|_{L^\infty(\Gamma)}\|\bar  u\|_{L^2(\Gamma)}\\
&\quad +k_{gM}\left(C l_0\|\lambda-\bar \lambda\|_{L^\infty(\Gamma)}^{1/2} +\|\lambda-\bar \lambda\|_{L^\infty(\Gamma)}\right)(M_5|\Gamma|^{1/2}+M_4\|\bar  u \|_{L^2(\Gamma)})\\
&\leq M_3 k_{\alpha M}|\Gamma|s_1^{1/2}\|\lambda-\bar \lambda\|_{L^\infty(\Gamma)}^{1/2} +  M_3M_4l_0\|\lambda-\bar \lambda\|_{L^\infty(\Gamma)}^{1/2} \\
&\quad +2M_3 k_{\beta M}\|\bar  u\|_{L^2(\Gamma)}s_1^{1/2}\|\lambda-\bar \lambda\|_{L^\infty(\Gamma)}^{1/2}+k_{gM}\left(C l_0 +s_1^{1/2}\right)(M_5|\Gamma|^{1/2}\\
&\quad +M_4\|\bar  u \|_{L^2(\Gamma)})\|\lambda-\bar \lambda\|_{L^\infty(\Gamma)}^{1/2}\\
&\leq k_4\|\lambda-\bar \lambda\|_{L^\infty(\Gamma)}^{1/2},
\end{align*} where 
\begin{align*}
	k_4&:=M_3 k_{\alpha M}|\Gamma|s_1^{1/2} +  M_3M_4l_0+2M_3 k_{\beta M}\|\bar  u\|_{L^2(\Gamma)}s_1^{1/2}v\\
	& \qquad \qquad \qquad
	+k_{gM}\left(C l_0 +s_1^{1/2}\right)\big(M_5|\Gamma|^{1/2}+M_4\|\bar  u \|_{L^2(\Gamma)}\big).
\end{align*} Combining this with \eqref{EstimCase2}-\eqref{BoundTerm2}, we obtain
\begin{align*}
\|I_\Gamma\|_{L^2(\Gamma)}\leq k_5\|\lambda-\bar \lambda\|_{L^\infty(\Gamma)}^{1/2},
\end{align*} where $k_5= k_2+ k_3+k_4.$  From this and \eqref{EtimAdjointSol}, \eqref{AdjointEstim1} it follows that 
\begin{equation}\label{EtimAdjointSol3}
\|\vartheta-\vartheta_\lambda\|_{W^{1,r}(\Omega)}\leq k_6\|\lambda-\bar \lambda\|_{L^\infty(\Gamma)}^{1/2},
\end{equation} where $k_6= CC_1(k_1 +  k_5)$.

By the assumption $(H5)$, we get $\sum \limits_{i=1}^{m}g_i(x, \hat y_\lambda(x), \lambda(x))\chi_i(x)=\max \limits_{1\leq i\leq m}{ g_i(x, \hat y_\lambda(x), \lambda(x))}$ for a.e. $x\in \Gamma$. 
Analysis similar to that in the proof of Lemma \ref{LemmaReg} 
shows that
\begin{align*}
&\sum \limits_{i=1}^{m}g_i(x, \hat y_\lambda(x), \lambda(x))\chi_i(x) +\hat u_\lambda(x)
\\&\qquad \qquad  \qquad =P_{(- \infty,0]}\left( \frac{1}{\beta(\lambda(x))}(\vartheta_\lambda(x)-\alpha(\lambda(x))) + \sum \limits_{i=1}^{m}g_i(x, \hat y_\lambda(x), \lambda(x))\chi_i(x)\right)
\end{align*}  and
\begin{equation*}
\sum \limits_{i=1}^{m}g_i[x]\chi_i(x) +\bar  u(x)=P_{(- \infty,0]}\left( \frac{1}{2\beta(\bar \lambda(x))}(\vartheta(x)-\alpha(\bar \lambda(x))) + \sum \limits_{i=1}^{m}g_i[x]\chi_i(x)\right)
\end{equation*} for a.e. $x\in \Gamma$. This implies that
\begin{align*}
\hat u_\lambda(x)-\bar  u(x)=&P_{(- \infty,0]}\left( \frac{1}{\beta(\lambda(x))}(\vartheta_\lambda(x)-\alpha(\lambda(x))) + \sum \limits_{i=1}^{m}g_i(x, \hat y_\lambda(x), \lambda(x))\chi_i(x)\right)\\
&-P_{(- \infty,0]}\left( \frac{1}{\beta(\bar \lambda(x))}(\vartheta(x)-\alpha(\bar \lambda(x))) + \sum \limits_{i=1}^{m}g_i[x]\chi_i(x))\right)\\
&+\sum \limits_{i=1}^{m}g_i[x]\chi_i(x))-\sum \limits_{i=1}^{m}g_i(x, \hat y_\lambda(x), \lambda(x))\chi_i(x) \quad \text{a.e. }x\in \Gamma.
\end{align*}Using the non-expansive property of metric projections, we have
\begin{align}\label{EstimFinal}
|\hat u_\lambda(x)-\bar  u(x)|\leq &\Big| \left( \frac{1}{\beta(\lambda(x))}(\vartheta_\lambda(x)-\alpha(\lambda(x))) + \sum \limits_{i=1}^{m}g_i(x, \hat y_\lambda(x), \lambda(x))\chi_i(x)\right)\notag\\
&-\left( \frac{1}{\beta(\bar \lambda(x))}(\vartheta(x)-\alpha(\bar \lambda(x))) + \sum \limits_{i=1}^{m}g_i[x]\chi_i(x)\right)\Big|\notag\\
&+\left|\sum \limits_{i=1}^{m}g_i[x]\chi_i(x)-\sum \limits_{i=1}^{m}g_i(x, \hat y_\lambda(x), \lambda(x))\chi_i(x)\right| \notag\\
\leq &\left|   \frac{1}{\beta(\lambda(x))}\big(\vartheta_\lambda(x)-\alpha(\lambda(x))\big) - \frac{1}{\beta(\bar \lambda(x))}\big(\vartheta(x)-\alpha(\bar \lambda(x))\big) \right|\notag\\
&+2\left|\sum \limits_{i=1}^{m}\big (g_i[x]-g_i(x, \hat y_\lambda(x), \lambda(x))\big)\chi_i(x)\right|. 
\end{align} Let us estimate the terms on the right hand side of \eqref{EstimFinal}. Using $(H3)$, \eqref{StrictNonegative}, \eqref{EtimAdjointSol3} and embedding $W^{1,r}(\Omega)\hookrightarrow C(\bar\Omega)$, we have the following estimation of the first  term:
\begin{align}
	&\left|\frac{1 }{\beta\left(\lambda\left(x\right)\right)}\left(\vartheta_\lambda\left(x\right)\right)-\alpha\left(\lambda\left(x\right)\right)-\frac{1 }{\beta\left(\bar \lambda\left(x\right)\right)}\left(\vartheta\left(x\right)-\alpha\left(\bar \lambda\left(x\right)\right)\right)\right| \notag\\
	&=\left|\frac{1 }{\beta\left(\lambda\left(x\right)\right)}\left(\vartheta_\lambda(x)-\alpha\left(\lambda\left(x\right)\right)- \vartheta\left(x\right)+\alpha(\bar \lambda(x))\right)\right| +\notag\\
	&\qquad+\left|\left(\frac{1 }{\beta\left(\bar \lambda\left(x\right)\right)}-\frac{1 }{\beta\left(\lambda\left(x\right)\right)}\right)\left(\vartheta\left(x\right)-\alpha\left(\bar \lambda\left(x\right)\right)\right)\right|\notag\\
	&\leq \frac{2}{\gamma}\left(|\vartheta_\lambda\left(x\right)-\vartheta\left(x\right)|+ |\alpha\left(\lambda\left(x\right)\right)- \alpha\left(\bar \lambda\left(x\right)\right)|\right)+\notag\\
	&\qquad+\frac{4}{\gamma^2}|\beta\left(\lambda\left(x\right)\right)-\beta\left(\bar \lambda\left(x\right)\right)|\left( |\vartheta\left(x\right)|+|\alpha\left(\bar \lambda\left(x\right)\right)|\right)\notag\\
	& \leq \frac{2}{\gamma}\left(Ck_6\|\lambda-\bar \lambda\|^{1/2}_\infty + k_{\alpha M}|\lambda\left(x\right)-\bar \lambda\left(x\right)| +\frac{2}{\gamma}k_{\beta M}|\lambda\left(x\right)-\bar \lambda\notag\left(x\right)|( M_2+M_5)\right) \notag
\\
&\leq  \frac{2}{\gamma}\left(Ck_6\|\lambda-\bar \lambda\|^{1/2}_\infty + \left(k_{\alpha M} +\frac{2}{\gamma}k_{\beta M} (M_2+M_5)\right)\|\lambda-\bar \lambda\|_{L^\infty(\Gamma)}\right)\qquad \qquad \notag\\
&\leq \frac{2}{\gamma}\left(Ck_6 + \left(k_{\alpha M} +\frac{2}{\gamma}k_{\beta M}( M_2+M_5)\right)s_1^{1/2}\right)\|\lambda-\bar \lambda\|_{L^\infty(\Gamma)}^{1/2} \notag\\
&\leq k_7\|\lambda-\bar \lambda\|_{L^\infty(\Gamma)}^{1/2},
\end{align} where $k_7:=\frac{2}{\gamma}\left(Ck_6 + \big(k_{\alpha M} +\frac{2}{\gamma}k_{\beta M}( M_2+M_5)\big)s_1^{1/2}\right)$. 

To estimate the second term, from $(H2)$ and \eqref{HolderCond1}, we have 
\begin{align}
	2\Big|\sum \limits_{i=1}^{m}\big (g_i[x]-g_i(x, \hat y_\lambda(x), \lambda(x))\big)\chi_i(x)\Big|
	&\leq 2k_{gM}(|\hat y_\lambda(x)-\bar y(x)| +|\lambda(x)-\bar \lambda(x)|) \notag \\
	&\leq 2k_{gM}(\|\hat y_\lambda-\bar y\|_{L^\infty(\bar\Omega)} +\|\lambda-\bar \lambda\|_{L^\infty(\Gamma)}) \notag \\
	&\leq 2k_{gM}(C\|\hat y_\lambda-\bar y\|_{W^{1,r}(\Omega)} +\|\lambda-\bar \lambda\|_{L^\infty(\Gamma)})\notag \\
	&\leq 2k_{gM}(Cl_0\|\lambda-\bar \lambda\|_{L^\infty(\Gamma)}^{1/2} +\|\lambda-\bar \lambda\|_{L^\infty(\Gamma)})\notag \\
	&\leq 2k_{gM}(Cl_0\|\lambda-\bar \lambda\|_{L^\infty(\Gamma)}^{1/2} +s_1^{1/2}\|\lambda-\bar \lambda\|^{1/2}_\infty)\notag \\
	&\leq k_8\|\lambda-\bar \lambda\|_{L^\infty(\Gamma)}^{1/2} \label{EstimSecTerm}
\end{align} for a.e. $x\in \Gamma$, where $k_8= 2k_{gM}\big(Cl_0 +s_1^{1/2}\big)$. \\
Combining \eqref{EstimFinal}-\eqref{EstimSecTerm} we obtain \begin{align*}
|\hat u_\lambda(x)-\bar u(x)|\leq (k_7+k_8)\|\lambda-\bar \lambda\|_{L^\infty(\Gamma)}^{1/2} \quad \text{a.e.}\ x\in\Gamma.
\end{align*} Thus, setting $k_9:= k_7+k_8$, we have 
\begin{align*}
\|\hat u_\lambda-\bar u\|_{L^\infty(\Gamma)} \leq k_9\|\lambda-\bar \lambda\|_{L^\infty(\Gamma)}^{1/2}.
\end{align*} Combining this with \eqref{HolderCond1} yields 
\begin{align*}
\|\hat y_\lambda-\bar y\|_{W^{1,r}(\Omega)} +\|\hat u_\lambda-\bar u\|_\infty \leq (l_0+k_9)\|\lambda-\bar \lambda\|_{L^\infty(\Gamma)}^{1/2}.
\end{align*} Therefore, the lemma is proved  with the choice $l_1= l_0+ k_9$.    
\end{proof}

The proof of Theorem \ref{Theoremkey} is complete.

\section*{Disclosure statement} 
 The authors report there are no competing interests to declare.

\section*{Acknowledgments}
This work is supported by the Vietnam Ministry of Education and Training and Vietnam Institute for
advanced study in Mathematics under Grant B2022-CTT-05.


\begin{thebibliography}{99}



\bibitem{Alt} Alt, W., Griesse, R., Metla, N., R\"{o}sch, A. (2010). Lipschitz stability for elliptic optimal control problems with mixed control-state contraints. {\it Optimization}. 59:833-849. 

\bibitem{Aubin} Aubin, J.P., Frankowska, H. (1990). {\it Set-Valued
	Analysis}. Birkh\"{a}user.


\bibitem{Bonn} Bonnans, J.F., Shapiro, A. (2000). {\it Perturbation
Analysis of Optimization Problems}. Springer. 




\bibitem{Brezis1} Br\'{e}zis, H. (2010). {\it Functional Analysis, Sobolev spaces and Partial Differential Equations}. Springer. 







\bibitem{Griesse} Griesse, R. (2006). Lipschitz stability of solutions to some state-constrained elliptic optimal control problems. {\it J.
Anal. Appl.} 25:435-455. 

\bibitem{Griesse1}  Griesse, R., Metla, N., R\"{o}sch, A. (2008). Convergence analysis of the SQP method for nonlinear mixed-constrained elliptic optimal control problems. {\it ZAMM·Z. Angew. Math. Mech.} 88:776-792. 







\bibitem{Ito} Ito, K., Kunisch, K. (1992). Sensitivity analysis of
solution to optimization problems in Hilbert spaces with application
to optimal control and estimation. {\it J. Differential Equations}. 99:1-40.



\bibitem{Kien0} Kien, B. T., Nhu, V. H. (2014). Second-order necessary optimality conditions for a class of semilinear elliptic optimal control problems with mixed pointwise constraints. {\it SIAM J. Control and Optim.} 52:1166-1202.

\bibitem{Kien1} Kien, B. T., Nhu, V. H., R\"{o}sch, A. (2015). Lower semicontinuity of the solution map to a parametric elliptic optimal control problem with mixed
pointwise constraints. {\it Optimization}. 64:1219-1238.



\bibitem{Kien5} Kien, B. T. (2017). Local  stability of solutions to parametric semilinear elliptic optimal control problems.  {\it Appl. Anal. Optim.} 1:361-379. 

\bibitem{Kien3} Kien, B. T., Tuan, N. Q., Wen, C. F., Yao, J. C. (2021). $L^\infty$-stability of a parametric optimal control problem governed by semilinear elliptic equations. {\it Applied Mathematics and Optimization}. 84:849-876. 

\bibitem{Kien7} Kien, B. T., Tuan, N. Q. (2022). Error estimates for approximate solutions to seminlinear elliptic optimal control problems with nonlinear and mixed constraints. {\it  Numerical Functional analysis and Optimization}.   
DOI: 10.1080/01630563.2022.2124271.




\bibitem{Malnowski1} Malanowski, K. (2003). Solution differentiability of parametric optimal control for elliptic equations. {\it System Modeling and Optimization XX}. Springer Science+ Business Media New York. 

\bibitem{Nhu} Nhu, V. H., Anh, N. H., Kien, B. T. (2013). Holder continuity of the solution map to an elliptic optimal control problem with  mixed constraints. {\it  Taiwanese J. Math.} 13:1245-1266.







\bibitem{Son} Son, N. H., Kien, B. T., R\"{o}sch, A. (2016). Second-order optimality conditions for boundary control problems with mixed pointwise constraints. {\it SIAM J. Optim.} 26:1912-1943. 

\bibitem{Son1} Son, N. H. (2017). On the semicontinuity of the solution map to a parametric boundary control problem. {\it  Optimization}. 66:311-329. 


\bibitem{Son2} Son, N. H., Dao, T. A. (2022). Upper semicontinuity of the solution map to a parametric boundary optimal control problem with unbounded constraint sets. {\it Optimization Letter}. 16:1799-1815.

\bibitem{Troltzsch} Tr\"{o}ltzsch, F. (2010). {\it Optimal Control of Partial Differential Equations, Theory, Method and Applications}. Americal Mathematical Society, Providence, Rhode Island.







\end{thebibliography}
\end{document}